\RequirePackage[l2tabu, orthodox]{nag}

\documentclass[reqno]{amsart}

\usepackage[fontsize=11pt]{scrextend}

\usepackage{lmodern}
\usepackage[T1]{fontenc}
\usepackage[utf8]{inputenc}
\usepackage[english]{babel}
\usepackage[margin=2.8cm]{geometry}

\usepackage{microtype} 
\usepackage{comment}
\usepackage{varwidth} 

\usepackage{amsmath,amssymb,amsthm,mathrsfs,listings,xskak,
latexsym,mathtools,mathdots,booktabs,enumitem,bm,url}
\usepackage[centertableaux]{ytableau}
\usepackage[pdftex]{hyperref}
\usepackage[capitalize,noabbrev]{cleveref}
\usepackage{caption}
\usepackage{subcaption}

\usepackage{todonotes}
\presetkeys%
    {todonotes}%
    {inline,backgroundcolor=yellow}{}

\newtheorem{theorem}{Theorem}[section]
\newtheorem{proposition}[theorem]{Proposition}
\newtheorem{lemma}[theorem]{Lemma}
\newtheorem{corollary}[theorem]{Corollary}

\theoremstyle{definition}

\newtheorem{example}[theorem]{Example}

\definecolor{lightblue}{rgb}{0.8,0.8,1.0}
\definecolor{lightgreen}{rgb}{0.8,1.0,0.8}
\definecolor{pBlue}{RGB}{86,139,190}
\definecolor{pCyan}{RGB}{149,186,201}
\definecolor{pSand}{RGB}{184,166,121}
\definecolor{pAlgae}{RGB}{87,115,135}
\definecolor{pSkin}{RGB}{236,216,167}
\definecolor{pGray}{RGB}{156,175,156}
\definecolor{pPink}{RGB}{215,114,127}
\definecolor{pOrange}{RGB}{211,153,80}

  \tikzset{
dot/.style = {circle, fill, minimum size=#1,
              inner sep=0pt, outer sep=0pt},
dot/.default = 6pt 
}

\newcommand{\defin}[1]{%
\relax\ifmmode%
\textcolor{blue}{#1}%
\else\textcolor{blue}{\emph{#1}}%
\fi%
}

\newcommand{\setZ}{\mathbb{Z}}

\newcommand{\setR}{\mathbb{R}}

\newcommand{\NN}{\text{NN}}

\tikzset{every picture/.append
	style={
		scale=1,
		x=1em,
		y=1em,
		entries/.style={xshift=-0.5em,yshift=-0.5em,font=\small},
		thickLine/.style={line width=1.4pt,line join=round},
		bgEntry/.style={xshift=-0.5em,yshift=-0.5em,
			regular polygon,regular polygon sides=4,fill,inner sep=0pt,minimum size=1.35em
		}
	}
}
\usetikzlibrary{calc}
\DeclareMathOperator{\des}{des}

\newcommand{\precdot}{\prec\mathrel{\mkern-5mu}\mathrel{\cdot}}

\makeatletter
\newcommand{\preceqdot}{\mathrel{\mathpalette\pr@ceqd@t\relax}}
\newcommand{\pr@ceqd@t}[2]{%
  \begingroup
  \sbox\z@{$#1\prec$}\sbox\tw@{$#1\preceq$}%
  \dimen@=\dimexpr\ht\tw@-\ht\z@\relax
  {\preceq}%
  \mkern-5mu
  \raisebox{\dimen@}{$\m@th#1\cdot$}%
  \endgroup
}
\makeatother

\newcommand{\interl}{\ll}

\title[Order polytopes]{Order polytopes of generalized snake posets are $h^*$-real-rooted}

\author[Braun]{Benjamin Braun}
\address{Department of Mathematics\\
         University of Kentucky
}
\email{benjamin.braun@uky.edu}
\urladdr{https://sites.google.com/view/braunmath/}

\author[Jal]{Aryaman Jal}
\address{Fakult\"{a}t f\"{u}r Mathematik, Ruhr-Universit\"{a}t Bochum, Bochum, Germany}

\email{aryaman.jal@rub.de}
\keywords{$h^{*}$-polynomial, partially ordered set, $P$-Eulerian polynomial, real-rootedness.}
\subjclass[2020]{06A07, 26C10, 05A15, 52B20}

\begin{document}

\begin{abstract}
Order polytopes for generalized snake posets were recently studied by von Bell et al. (2022), and are known to be unimodularly equivalent to strength-one flow polytopes for acyclic directed graphs strongly dual to generalized snake posets.
Lee, Vindas-Mel\'endez, and Wang (2026) conjectured that the Ehrhart $h^*$-polynomials of these order polytopes are real-rooted.
We prove this conjecture using a connection between these $h^*$-polynomials and non-nesting rook polynomials, which were recently introduced by Alexandersson and Jal (2024+) in connection with $P$-Eulerian polynomials for width two posets.
\end{abstract}

\thanks{BB was partially supported by NSF award DMS-2450299. AJ was supported by the SPP 2458 ``Combinatorial Synergies'', funded by the Deutsche Forschungsgemeinschaft (DFG, German Research Foundation) – project number 539866596. The authors thank the Max Planck Institute for Mathematics in the Sciences for hosting the conference ``Recent Developments on Lattice Polytopes in Leipzig'', where this project was initiated.}

\maketitle

\section{Introduction}\label{sec:intro}

Order polytopes for generalized snake posets, which are width two distributive lattices, were studied by von Bell et al.~\cite{vonBellBraunetal2022GenSnakePosets}, with a focus on inequalities satisfied by their normalized volumes.
These inequalities were generalized by Lee, Vindas-Mel\'endez, and Wang~\cite{lee2024generalizedsnakeposetsorder} to coefficient-wise inequalities for the Ehrhart $h^*$-polynomials of these order polytopes, and they conjectured that these $h^*$-polynomials are real-rooted.
Our goal in this work is to prove this conjecture.

Generalized snake posets are strongly planar.
Hence, a result of M\'esz\'aros, Morales, and Striker~\cite{MMS} implies that for a generalized snake poset $P$, the order polytope of $P$ is unimodularly equivalent to the strength-one flow polytope of the acyclic directed graph (DAG) strongly dual to $P$.
The family of flow polytopes arising via strong duals of generalized snake posets has recently arisen in the classification of flow polytopes for amply framed DAGs that admit a projection to a locally antiblocking $\mathbf{g}$-polytope~\cite{berggren2026locallyantiblockingmathbfgpolytopesflow}.
Our results imply that the flow polytopes for these dual DAGs are also $h^*$-real-rooted.
It is unknown whether or not all flow polytopes have real-rooted $h^*$-polynomials, though some families of flow polytopes are known to have this property~\cite{BBDH}.

The $h^*$-polynomial of an order polytope is the $P$-Eulerian polynomial of the corresponding poset with a natural labeling, and many important polynomials in enumerative combinatorics arise in this context. 
For example, the $P$-Eulerian polynomials for the special cases of snake and ladder posets are respectively the Delannoy and Narayana polynomials~\cite{lee2024generalizedsnakeposetsorder}, both of which are real-rooted.
Further importance of $P$-Eulerian polynomials in enumerative and algebraic combinatorics arises from the Neggers--Stanley conjecture, which posited the real-rootedness of $W_{P}(t)$~\cite{Neggers1978, Stanley86LogConcavitySurvey}. 
Br\"{a}ndén~\cite{Branden2004NeggersStanley} and Stembridge~\cite{Stembridge2007NeggersStanleyCounter} gave counterexamples to this conjecture in its arbitrarily labeled and naturally labeled variant, respectively. 
The counterexamples came from the class of width two posets, and generalized snake posets fall within this class. 
Studying distributional properties of polynomials with non-negative real coefficients, such as unimodality, log-concavity, etc., is a classical topic in enumerative combinatorics~\cite{Stanley86LogConcavitySurvey, Brenti1989UnimodalAMS}.
Determining further distributional properties of $P$-Eulerian polynomials is an open question, although Alexandersson and Jal~\cite{AlexanderssonJal2024RookMatroid} recently established the ultra-log-concavity of these polynomials in the case of naturally labeled width two posets.  

Our proof of real-rootedness in the generalized snake poset case involves the theory of non-nesting rook placements, recently introduced by Alexandersson and Jal~\cite{AlexanderssonJal2024RookMatroid} in the context of rook matroids and $P$-Eulerian polynomials of naturally labeled width two posets $P$.
When $P$ is of width two, Alexandersson and Jal prove that the $P$-Eulerian polynomial for a naturally labeled poset $P$ is also equal to the non-nesting rook polynomial of a skew shape corresponding to $P$. 
This connection also yielded the ultra-log-concavity of the sequence of coefficients of the $P$-Eulerian polynomial of such posets. 
In the present work, the interpretation of $h^*$-polynomials as non-nesting rook polynomials reformulates, and yields simpler proofs of, the recursions originally found by Lee, Vindas-Mel\'endez, and Wang. 
Standard techniques from the theory of interlacing polynomials then allow us to establish real-rootedness for the $P$-Eulerian polynomials of generalized snake posets.

The remainder of our paper is structured as follows.
In Section~\ref{sec:preliminaries}, we review necessary background and establish several preliminary results.
In Section~\ref{sec:polynomials}, we prove several technical results regarding Narayana polynomials and we study the structure of $h^*$-polynomials for order polytopes of generalized snake posets using non-nesting rook placements.
In Section~\ref{sec:realrootedness}, we prove our main result that order polytopes of generalized snake posets are $h^*$-real-rooted.
In Section~\ref{sec:aidisclosure}, we specify how LLM tools were used in this work.

\section{Preliminaries}\label{sec:preliminaries}

We use $[n]$ to mean the set $\{1,2, \dotsc , n\}$ and $[m, n]$ to mean the 
discrete interval $\{m, m+1, \dotsc , n\}$. 
The set of size-$k$ subsets of $[n]$ is denoted by $\binom{[n]}{k}$. 

\subsection{Posets, Ehrhart theory, and $P$-Eulerian polynomials}

Given a poset $P$ on the elements $[n]$ with partial order $\prec$, the \defin{order polytope} of $P$ is the polytope
\[
\mathcal{O}(P):= \{x\in \setR^n : 0\leq x_i\leq 1 \text{ for }i=1,\ldots,n \text{ and } x_i\leq x_j \text{ when } i\prec j \}\, . 
\]
The order polytope is known to be a lattice polytope with vertices given by indicator vectors of upper order ideals of $P$~\cite{Stanley86TwoPosetPolytopes}.
Given a $d$-dimensional lattice polytope $\mathcal{P}\subset \setR^n$, the \defin{Ehrhart series} of $\mathcal{P}$ is the rational generating function
\[
\sum_{k\geq 0}|k\mathcal{P}\cap \setZ^n|t^k = \frac{\sum_{j=0}^dh_j^*t^j}{(1-t)^{d+1}} \, .
\]
Rationality of this series is due to Ehrhart~\cite{ehrhart}.
The polynomial $h^*(\mathcal{P};t):=\sum_{j=0}^dh_j^*t^j$ is called the \defin{$h^*$-polynomial} of $\mathcal{P}$, and a theorem of Stanley~\cite{stanleydecompositions} asserts that $h^*_j\in \setZ_{\geq 0}$ for all $j$.

For the order polytope of a finite poset, Stanley~\cite{Stanley86TwoPosetPolytopes} gave a combinatorial interpretation for the coefficients of the $h^*$-polynomial that we now recall. A labeling of a poset $P$ on $n$ elements is a bijection $\omega: P \to [n]$. We say that $\omega$ is a \defin{natural labeling}, or linear extension of $P$, if $i \prec j$ implies $\omega(i) < \omega(j).$  The \defin{Jordan--H\"{o}lder set} of $(P, \omega)$ is the set of all permutations of $[n]$ the inverses of which are linear extensions of $(P, \omega)$: 
\[
\mathcal{L}(P, \omega) := \{\sigma \in \mathfrak{S}_{n}: i \prec j \implies \sigma^{-1}({\omega(i)}) < \sigma^{-1}({\omega(j)})\}.
\]
In other words, $\sigma \in \mathcal{L}(P, \omega)$ if for every relation $i \prec j$ in $P$, we have that $\omega(i)$ precedes $\omega(j)$ in the one-line notation of the permutation $\sigma.$
The \defin{$(P, \omega)$-Eulerian polynomial} is the descent-generating polynomial of the Jordan--H\"{o}lder set of $(P, \omega)$: \[
W_{P, \omega}(t) \coloneqq \sum_{\sigma \in \mathcal{L}(P, \omega)}t^{\des(\sigma)}.
\]
When $\omega$ is natural, we simply write $W_{P}$ for this polynomial, since it is independent of the choice of natural labeling.
Stanley proved the following theorem relating $P$-Eulerian polynomials and $h^*$-polynomials.

\begin{theorem}[{Stanley,~\cite{Stanley86TwoPosetPolytopes}}]
    \label{thm:stanleyorderwpoly}
    Given a finite naturally labeled poset $P$, we have
    \[
    h^*(\mathcal{O}(P);t)=W_P(t) \, .
    \]
\end{theorem}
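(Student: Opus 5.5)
We prove Stanley's theorem by computing the Ehrhart series of $\mathcal{O}(P)$ directly, through the standard triangulation of the order polytope and a count of lattice points in each dilate. Assume without loss of generality that $P$ is on $[n]$ and the natural labeling is the identity, so $i\prec j$ implies $i<j$. For $k\ge 0$, the lattice points of $k\mathcal{O}(P)$ are exactly the order-preserving maps $f\colon P\to\{0,1,\dots,k\}$, meaning $f(i)\le f(j)$ whenever $i\prec j$.

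The plan is to partition these maps by linear extension. Given such an $f$, sort the elements by value of $f$, breaking ties by label. This produces a unique permutation $\sigma$, read in one-line notation, with
\[
f(\sigma(1))\le f(\sigma(2))\le\cdots\le f(\sigma(n)),
\]
where the inequality is strict whenever $\sigma(r)>\sigma(r+1)$, that is, at each descent of $\sigma$.

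We then check that $\sigma\in\mathcal{L}(P,\omega)$. Suppose $i\prec j$. Then $f(i)\le f(j)$, and $i<j$ by naturality. If $f(i)<f(j)$, then $i$ precedes $j$ because sorting is by value. If $f(i)=f(j)$, then $i$ precedes $j$ because ties are broken by label. This is the one place naturality is used, and it is the step needing care: with a non-natural labeling the tie-break would conflict with the order relations.

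Conversely, for each $\sigma\in\mathcal{L}(P,\omega)$, every sequence $0\le a_1\le\cdots\le a_n\le k$ that is strict at the descents of $\sigma$ defines an order-preserving $f$ by $f(\sigma(r))=a_r$. Sorting this $f$ recovers $\sigma$. It does so because within a block of equal values there are no descents, so that block is increasing in label. Hence
\[
|k\mathcal{O}(P)\cap\setZ^n|=\sum_{\sigma\in\mathcal{L}(P,\omega)} \#\{0\le a_1\le\cdots\le a_n\le k : a_r<a_{r+1}\text{ for } r\in \mathrm{Des}(\sigma)\}.
\]

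Next we count the sequences for a fixed $\sigma$ with $d=\des(\sigma)$. Subtract $1$ from $a_{r+1},\dots,a_n$ for each descent $r$. This is a bijection onto weakly increasing sequences $0\le b_1\le\cdots\le b_n\le k-d$, so the count is $\binom{k-d+n}{n}$. Using the identity
\[
\sum_{k\ge0}\binom{k-d+n}{n}t^k=\frac{t^d}{(1-t)^{n+1}},
\]
we obtain
\[
\sum_{k\ge 0}|k\mathcal{O}(P)\cap\setZ^n|\,t^k=\frac{\sum_{\sigma}t^{\des(\sigma)}}{(1-t)^{n+1}}=\frac{W_P(t)}{(1-t)^{n+1}}.
\]

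Finally, $\mathcal{O}(P)$ is full-dimensional: the point $x_i=i/(n+1)$ lies in its interior, so $d=n$. The numerator over $(1-t)^{n+1}$ is therefore exactly $h^*(\mathcal{O}(P);t)$, which gives the theorem. The main obstacle is the bijection between order-preserving maps and pairs $(\sigma,\text{compatible sequence})$, and specifically the verification that the tie-breaking rule yields a linear extension.
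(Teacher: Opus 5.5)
Your proof is correct. The paper does not prove this statement itself but quotes it from Stanley, and your argument is essentially the standard $P$-partition (fundamental lemma) proof behind that citation: it is the lattice-point form of the half-open decomposition of the canonical unimodular triangulation of $\mathcal{O}(P)$ into the simplices $\{0\le x_{\sigma(1)}\le\cdots\le x_{\sigma(n)}\le 1\}$, $\sigma\in\mathcal{L}(P,\omega)$. The one detail you leave implicit is that the terms with $k<d$ vanish in your generating-function identity; this holds because $d\le n-1$ forces $0\le k-d+n<n$, so $\binom{k-d+n}{n}=0$ there.
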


Finally, we define the \defin{width} of $P$ to be the size of the largest antichain in $P$. 

\subsection{Rook placements and generalized snake posets}

An \defin{integer partition} $\lambda = (\lambda_{1}, \ldots , \lambda_{n})$ is a finite sequence of positive integers satisfying $\lambda_{1} \geq \lambda_{2} \geq \ldots \geq \lambda_{n}$. 
We draw the \defin{Ferrers diagram} of $\lambda$ as a left-justified, weakly 
decreasing array of boxes, as in the English convention. 
We denote by $\delta_{n}$ the staircase partition $\delta_{n} = (n, n-1, \ldots , 1)$. 
We write $\mu \subseteq \lambda$ to indicate containment of partitions in the sense of containment of their Ferrers diagrams. 
Given two integer partitions $\lambda \supseteq \mu$ of length at most $r$, we call the 
board with cells
\[
  \left\{ (i, j) : 1 \leq i \leq r \text{ and } \mu_i <  j \leq \lambda_i \right\}
\]
the \defin{skew Ferrers board} associated with $\lambda/\mu$.
We also refer to a skew Ferrers board as a \defin{skew shape}.  
The \defin{size} of a skew shape $\lambda/\mu$, denoted by  $|\lambda/\mu|$, is the number of boxes in its diagram. 
An \defin{outer corner} of $\lambda/\mu$ is a cell $(i, j) \notin \lambda/\mu$ such that the cells to the left and above it are in $\lambda/\mu$. 
An \defin{inner corner} of $\lambda/\mu$ is a cell $(i, j) \notin \lambda/\mu$ such that the cells to the right and below are in $\lambda/\mu$. A skew shape is a \defin{squarecase} if every inner corner and every outer corner has a vertex on the antidiagonal of the shape.

\begin{example}
    \label{ex:outerinnersquarecase}
    Consider the skew shape given on the right-hand side of Figure~\ref{fig:snake_poset_example}.
    As every outer corner and every inner corner lies on the antidiagonal, i.e., the diagonal line from the bottom-left corner to the top-right corner, this is a squarecase shape.
\end{example}

Given a skew board, a non-attacking rook 
placement $\rho$ on a board $B$ is a subset of cells of $B$, called rooks, for which no two cells share a row or column index. 
The size of a rook placement is denoted by $|\rho|$. 
Two rooks in a non-attacking rook placement are \defin{nesting} if 
one rook lies strictly below and to the right of another. 
A \defin{non-nesting rook placement} on a 
board $B$ is a non-attacking rook placement such that no pair of rooks forms a nesting.
Denote the set of non-nesting rook placements on $\lambda/\mu$ by $\NN(\lambda/\mu)$.  
We define the \defin{non-nesting rook polynomial} of $\lambda/\mu$ to be 
\[
M_{\lambda/\mu}(t) \coloneqq \sum_{\rho \in \NN(\lambda/\mu)}t^{|\rho|} \, .
\]
Observe that $\lambda/\mu$, the conjugate shape $\lambda'/\mu'$, and the half-circle rotation   (denoted by $(\lambda/\mu)^{\mathrm{R}}$) all have the same non-nesting rook polynomial. 

The following result, best seen as a consequence of Dilworth's theorem~\cite{Dilworth1950DecompositionTheorem}, has been rediscovered in various forms~\cite{FelsnerWernisch1997MarkovChains, ChanPakPanova2022CrossProdConj, AlexanderssonJal2024RookMatroid}; we follow the conventions of Alexandersson and Jal~\cite{AlexanderssonJal2024RookMatroid}.

\begin{theorem}[{Alexandersson and Jal,~\cite[Theorem 5.4, Lemma 5.6]{AlexanderssonJal2024RookMatroid}}]
\label{thm:correspondence}
    There is a bijective correspondence between (naturally labeled) posets $P$ of width two and skew shapes $\lambda/\mu$. Under this correspondence, the $P$-Eulerian polynomial of $P$ is equal to the non-nesting rook polynomial of $\lambda/\mu$: $W_{P}(t) = M_{\lambda/\mu}(t)$. 
\end{theorem}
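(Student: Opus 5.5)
The plan is to build the correspondence from a chain decomposition. By Dilworth's theorem, a width-two poset $P$ on $n$ elements splits as a disjoint union of two chains $C_1 = \{a_1 \prec \cdots \prec a_p\}$ and $C_2 = \{b_1 \prec \cdots \prec b_q\}$. Any linear extension of $P$ is then a shuffle of $C_1$ and $C_2$, which we record as a lattice path from $(0,0)$ to $(p,q)$. The relations between the two chains cut out a region in the $p \times q$ grid. For each $i$, let $\beta(i)$ be the number of $b$'s forced below $a_i$ and $\alpha(i)$ the number forced above it. Both sequences are weakly monotone in $i$, and the allowed paths are exactly those staying inside the region determined by $\alpha$ and $\beta$. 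Reading this region as $\lambda/\mu$, with row $i$ of the board corresponding to $a_i$ and the columns to the $b_j$, gives the shape. The map is only defined up to the choice of chain decomposition, so to get a genuine bijection I would fix the decomposition canonically. One option is to take $C_1$ to be the greedy leftmost chain; another is to normalize the skew shape by deleting empty rows and columns and to identify boards under conjugation and rotation. Injectivity and surjectivity then reduce to checking that every pair of monotone sequences $(\mu,\lambda)$ with $\mu \subseteq \lambda$ arises from exactly one such poset.

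For $W_P = M_{\lambda/\mu}$, I would choose a natural labeling adapted to the decomposition, so that the $a$'s and $b$'s receive labels interleaved according to one fixed extreme linear extension, for example the lowest path. Since $W_P$ does not depend on the natural labeling, this costs nothing. A linear extension, viewed as a path, has a descent exactly where a $b$-label immediately follows a larger $a$-label (or vice versa), and this is governed by the path's corners relative to the reference path. I would then map each path to its set of \emph{valleys} (or peaks), which are corner cells lying inside $\lambda/\mu$. The cells at these corners form a non-attacking placement, since corners of one path occupy distinct rows and columns. They are also non-nesting, because the corners of a monotone path increase weakly in the NE direction, so no rook lies strictly southeast of another.

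Conversely, a non-nesting placement lies on a chain in the NE order, so it determines a unique path whose corners are exactly those rooks. One has to check that this path stays inside the board, using convexity of the skew shape in rows and columns. The number of descents then equals the number of rooks, and this is the step I expect to be the main obstacle. Depending on which labeling is used, a descent corresponds either to a valley or to a peak measured against the boundary path. So the real work is choosing the labeling, and matching the corner type so that the boundary path has no descents and every corner that comes from the poset structure counts exactly once. I would test the construction first on the antichain of size two and on a $2+2$ poset, and adjust the convention before writing the general argument.
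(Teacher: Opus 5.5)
The paper does not prove this statement; it quotes it from Alexandersson and Jal. Your argument therefore has to stand on its own, and it breaks at the inverse map.

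Your setup is right. Take the $a_i$ as East steps and the $b_j$ as North steps. Then the cell in column $i$ and row $j$ lies between the two boundary paths iff $a_i$ and $b_j$ are incomparable. If $\omega$ is read off the lowest path $L$, then $\omega(b_j)<\omega(a_i)$ iff $b_j\prec a_i$. Hence the only descents are a $b_j$ immediately followed by an $a_i$ with $a_i,b_j$ incomparable, i.e.\ North-then-East corners whose cell lies \emph{in} $\lambda/\mu$.

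The problem is that an admissible path also has North-then-East corners whose cell is \emph{off} the board, wherever $b_j\prec a_i$, and these are not descents. So your claim that the corners of a path are cells of $\lambda/\mu$ is false. The inverse ``the unique path whose corners are exactly the rooks'' then leaves the region, and no convexity argument repairs this. Concretely, take $a_1\prec a_2$, $b_1\prec b_2$, $b_1\prec a_2$. The board is $\{(a_1,b_1),(a_1,b_2),(a_2,b_2)\}$. The empty placement would be sent to $a_1a_2b_1b_2$, which violates $b_1\prec a_2$. The actual descent-free extension is $L=a_1b_1a_2b_2$, which turns at the off-board cell $(a_2,b_1)$. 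Your test cases (the $2$-antichain and $2+2$) are rectangular boards on which $L$ has no such corner, so they would not expose this.

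The missing idea is to record a path by the heights $h_1\le\dots\le h_p$ of its East steps, with $\beta(i)\le h_i\le q-\alpha(i)$ and $h_0=0$. Step $i$ is a descent iff $h_i>\max(h_{i-1},\beta(i))$, with descent cell $(a_i,b_{h_i})$. Every other East step sits at the lowest admissible height $\max(h_{i-1},\beta(i))$. So a path is determined by its descent cells. The inverse of a non-nesting placement $(a_{i_1},b_{j_1}),\dots,(a_{i_k},b_{j_k})$ with $i_1<\dots<i_k$ and $j_1<\dots<j_k$ is then $h_{i_t}=j_t$ and $h_i=\max(h_{i-1},\beta(i))$ otherwise. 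This is the lowest path through the rook corners, and in general it has extra, off-board corners. You must check that this path is admissible and has exactly these descents, namely $j_t>\max(h_{i_t-1},\beta(i_t))$ and $h_i\le q-\alpha(i)$. Both follow because $\beta$ and $q-\alpha$ are weakly increasing. This is the real content of $W_P=M_{\lambda/\mu}$, and it is exactly the step you left open.

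The poset-to-shape bijection is also unsettled. Two-chain decompositions are the proper $2$-colorings of the incomparability graph, so the chains can be swapped independently on each of its components. When the incomparability graph is connected, this replaces $\lambda/\mu$ by its rotated conjugate. A definite convention is therefore needed. Identifying boards up to conjugation and rotation gives a bijection with equivalence classes, not with skew shapes. Deleting empty rows and columns (which record elements comparable to everything in the other chain) loses injectivity.
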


This correspondence endows $P$ with a canonical natural labeling and labels the cells of the skew shape $\lambda/\mu$ in accordance with this labeling. 

Generalized snake posets, defined below, are a special class of width two posets.
Through the correspondence given in Theorem~\ref{thm:correspondence}, generalized snake posets correspond to a subclass of squarecases.
Geometric properties of their order polytopes were studied in~\cite{vonBellBraunetal2022GenSnakePosets} and~\cite{lee2024generalizedsnakeposetsorder}.
Given a word $\mathbf{w} = \varepsilon w_{1} \ldots w_{n}$ where $w_{i} \in \{L, R\}$, the \defin{generalized snake poset} $P( \mathbf{w})$ is the width two poset on $2(n+1)$ elements partitioned into two chains $C_{1} = \{1,3,5,\ldots , 2n+1\}$ and $C_{2} =\{2,4,6,\ldots , 2n+2\}$, together with the cover relations \begin{align*}
        2(n+1-i) &\precdot 2(n+1-i) +1 \qquad \text{for all $i \in [n]$ with $w_{i} = R$,} \\  2(n-i)+1 &\precdot 2(n-i+2) \, \, \,\, \quad \qquad \text{for all $i \in [n]$ with $w_{i} = L$.}
\end{align*}
When the word is $\mathbf{w} = \varepsilon LLL \cdots$, we get the \defin{ladder posets} $L_{n}$  and when the word is $\mathbf{w} = \varepsilon LRLR \cdots$, we get the \defin{regular snake posets} $S_{n}$.
Given a word $\mathbf{w}=\varepsilon w_1 w_2\cdots w_n$ defining a generalized snake poset, we denote by $M_{\mathbf{w}}(t)$ the non-nesting rook polynomial for the skew shape corresponding to the poset $P(\mathbf{w})$.
Note that every generalized snake poset is naturally labeled by its elements, and thus we have the equalities
\[
h^*(\mathcal{O}(P(\mathbf{w}));t)=M_{\mathbf{w}}(t)=W_{P(\mathbf{w})}(t) \, .
\]

\begin{example}
The generalized snake poset for $\mathbf{w} = \varepsilon RRLLLRLRR$ and its corresponding skew shape are drawn in Figure~\ref{fig:snake_poset_example}. The associated $P$-Eulerian polynomial is
 \[
M_{\mathbf{w}}(t) = M_{\lambda/\mu}(t) = t^{10} + 24t^{9} + 217t^{8} + 962t^{7} + 2291t^{6} + 3048t^{5} + 2291t^{4} + 962t^{3} + 217t^{2} + 24t + 1\, .
\]
It can be verified computationally that this polynomial is real-rooted.

\begin{figure}
    \centering
    \begin{subfigure}{0.40\textwidth}
    \centering
  \scalebox{0.9}{
\begin{tikzpicture}
\node[dot=4pt, fill=black, label={[xshift=-0.55cm, yshift=-0.25cm, text=blue]{$1$}}] at (0, 0) (a1) {};
\node[dot=4pt, fill=black, label={[xshift=-0.55cm, yshift=-0.25cm, text=blue]{$3$}}] at (0, 2) (a2) {};
\node[dot=4pt, fill=black, label={[xshift=-0.55cm, yshift=-0.25cm, text=blue]{$5$}}] at (0, 4) (a3) {};
\node[dot=4pt, fill=black, label={[xshift=-0.55cm, yshift=-0.25cm, text=blue]{$7$}}] at (0, 6) (a4) {};
\node[dot=4pt, fill=black, label={[xshift=-0.55cm, yshift=-0.25cm, text=blue]{$9$}}] at (0, 8) (a5) {};
\node[dot=4pt, fill=black, label={[xshift=-0.55cm, yshift=-0.25cm, text=blue]{$11$}}] at (0, 10) (a6) {};
\node[dot=4pt, fill=black, label={[xshift=-0.55cm, yshift=-0.25cm, text=blue]{$13$}}] at (0, 12) (a7) {};
\node[dot=4pt, fill=black, label={[xshift=-0.55cm, yshift=-0.25cm, text=blue]{$15$}}] at (0, 14) (a8) {};
\node[dot=4pt, fill=black, label={[xshift=-0.55cm, yshift=-0.25cm, text=blue]{$17$}}] at (0, 16) (a9) {};
\node[dot=4pt, fill=black, label={[xshift=-0.55cm, yshift=-0.25cm, text=blue]{$19$}}] at (0, 18) (a10) {};

\node[dot=4pt, fill=black, label={[xshift=0.55cm, yshift=-0.25cm, text=blue]{$2$}}] at (4, 0) (b1) {};
\node[dot=4pt, fill=black, label={[xshift=0.55cm, yshift=-0.25cm, text=blue]{$4$}}] at (4, 2) (b2) {};
\node[dot=4pt, fill=black, label={[xshift=0.55cm, yshift=-0.25cm, text=blue]{$6$}}] at (4, 4) (b3) {};
\node[dot=4pt, fill=black, label={[xshift=0.55cm, yshift=-0.25cm, text=blue]{$8$}}] at (4, 6) (b4) {};
\node[dot=4pt, fill=black, label={[xshift=0.55cm, yshift=-0.25cm, text=blue]{$10$}}] at (4, 8) (b5) {};
\node[dot=4pt, fill=black, label={[xshift=0.55cm, yshift=-0.25cm, text=blue]{$12$}}] at (4, 10) (b6) {};
\node[dot=4pt, fill=black, label={[xshift=0.55cm, yshift=-0.25cm, text=blue]{$14$}}] at (4, 12) (b7) {};
\node[dot=4pt, fill=black, label={[xshift=0.55cm, yshift=-0.25cm, text=blue]{$16$}}] at (4, 14) (b8) {};
\node[dot=4pt, fill=black, label={[xshift=0.55cm, yshift=-0.25cm, text=blue]{$18$}}] at (4, 16) (b9) {};
\node[dot=4pt, fill=black, label={[xshift=0.55cm, yshift=-0.25cm, text=blue]{$20$}}] at (4, 18) (b10) {};

\draw[thick,black] (a1)-- (a2) -- (a3)-- (a4)-- (a5) -- (a6) -- (a7) -- (a8) -- (a9) -- (a10);
\draw[thick,black] (b1)-- (b2) -- (b3)-- (b4)-- (b5) -- (b6) -- (b7) -- (b8) -- (b9) -- (b10);

\draw[thick, black] (b1)-- (a2);
\draw[thick,black] (b2)-- (a3);
\draw[thick,black] (b4)-- (a5);
\draw[thick,black] (b8)-- (a9);
\draw[thick,black] (b9)-- (a10);

\draw[thick,black] (a3)-- (b4);
\draw[thick,black] (a5)-- (b6);
\draw[thick,black] (a6)-- (b7);
\draw[thick,black] (a7)-- (b8);
\end{tikzpicture}
}
\end{subfigure}
~
\begin{subfigure}{0.40\textwidth}
\centering
   \begin{tikzpicture}[inner sep=0in,outer sep=0in]
\node (n) {\begin{varwidth}{6cm}{
\ytableausetup{boxsize=1.25em}
\begin{ytableau} \none & \none[2] & \none[4] & \none[6] & \none[8] & \none[10] & \none[12] & \none[14] & \none[16] & \none[18] & \none[20]  \\ \none[19] & \none & \none & \none & \none & \none & \none & \none & \none & \none &  \\ \none[17] & \none & \none & \none & \none & \none & \none & \none & \none &  &  \\ \none[15] & \none & \none & \none & \none &  &  &  &  &  &  \\ \none[13] & \none & \none & \none & \none &  &  &  \\ \none[11] & \none & \none & \none & \none &  &  \\ \none[9] & \none & \none & \none & \none &  \\ \none[7] & \none & \none &  &  &  \\ \none[5] & \none & \none &  \\ \none[3] & \none &  &  \\ \none[1] &  &  &  \\ \end{ytableau}

}\end{varwidth}};
\end{tikzpicture}

\end{subfigure}
\caption{Generalized snake poset $P$ for $\mathbf{w} = \varepsilon RRLLLRLRR$ and the skew shape corresponding to $P$. 
The two chains give rise to the rows and columns of the skew shape. 
The cover relations $\nearrow$ and $\nwarrow$ correspond respectively to the outer and inner corners of the skew shape.}
\label{fig:snake_poset_example}
\end{figure}

\end{example}

It is straightforward to verify that after adjoining minimum and maximum elements, generalized snake posets are width two distributive lattices.
However, not every width two distributive lattice is a generalized snake poset.
In particular, extending a generalized snake poset by a sequence of maximal and/or minimal elements, and taking ordinal sums of the resulting posets, also yield width two distributive lattices.

\subsection{Real-rooted polynomial lemmas}

Regarding real-rooted polynomials, we follow the notation and conventions of Br\"and\'en's survey article~\cite{Branden2015}. 
Let $f$ and $g$ be polynomials with positive leading coefficients and
real, non-positive
zeros, $a_{i}$ and $b_{i}$, respectively.
We say that $f$ \defin{interlaces} $g$, and we write
$\defin{f \interl g}$ if
\[
\dotsm \leq a_{3}\leq b_{3}\leq a_{2}\leq b_{2}\leq a_{1}\leq b_{1} \leq 0.
\]
Note that this condition implies that $\deg(f)=\deg(g)$ or $\deg(f)+1=\deg(g)$. By convention, any polynomial of degree $0$ interlaces a polynomial of degree $1$, and the identically-zero function interlaces (and is interlaced by) any real-rooted polynomial with positive leading coefficient, i.e., $0 \interl f, f \interl 0, 0 \interl 0$. We say that sequence of real-rooted polynomials $(f_{k})_{k=1}^{n}$ is an \defin{interlacing sequence of polynomials} if $f_{i} \interl f_{j}$ for every $1 \leq i <j \leq n$. 

The usefulness of interlacing arises from the following result, commonly referred to as Obreschkoff's theorem.

\begin{proposition}[{Obreschkoff,~\cite[Satz 5.2]{Obreschkoff1963Interlacing}\label{prop:obreschkoff_thm}}]
Let $f, g \in \mathbb{R}[t]$ be non-zero polynomials with real roots. Then $\alpha f + \beta g$ is real-rooted for all $\alpha, \beta \in \mathbb{R}$ if and only if
$f \interl g $ or $ g \interl f$.
\end{proposition}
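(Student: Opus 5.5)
The plan is to reduce both directions to the behavior of the rational function $R(t)=f(t)/g(t)$ on the real line. First, remove common zeros: write $f=d\,f_0$ and $g=d\,g_0$ with $d=\gcd(f,g)$. Since $d$ is real-rooted, $\alpha f+\beta g=d(\alpha f_0+\beta g_0)$ is real-rooted if and only if $\alpha f_0+\beta g_0$ is. Moreover, $f\interl g$ (or $g\interl f$) holds if and only if the same relation holds for $f_0,g_0$, because inserting a common root into both interlacing root sequences preserves the chain of inequalities. So we may assume that $f$ and $g$ have no common zeros. Degenerate cases, where one of $f,g$ is constant or the degrees differ by more than one, are handled separately. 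If the degrees differ by at least two, then $f+\varepsilon g$ for a suitable sign and small $\varepsilon$ fails to be real-rooted, by a leading-coefficient and Descartes-type count. This is consistent with interlacing forcing $|\deg f-\deg g|\le 1$.

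\emph{Sufficiency.} Suppose $f\interl g$ with simple, distinct, real zeros; the multiple-root case follows by a limiting argument, since real-rootedness is closed under limits of polynomials of bounded degree. Then the partial fraction decomposition
\[
\frac{f(t)}{g(t)} = c + \sum_i \frac{r_i}{t-b_i}
\]
has all residues $r_i$ of one sign. This is the standard equivalence, checked by evaluating the sign of $f(b_i)/g'(b_i)$ using the alternation of roots. Hence $R$ is strictly monotone on each interval between consecutive poles $b_i$ and maps each such interval onto $\setR$. For $\beta\neq 0$, the equation $R(t)=-\beta/\alpha$ therefore has a solution in each of the bounded intervals, plus possibly one in an unbounded interval. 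This yields $\max(\deg f,\deg g)$ real roots of $\alpha f+\beta g$, which is all of them. The cases $\alpha=0$ or $\beta=0$ are trivial.

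\emph{Necessity.} This is the main obstacle. Assume every $f-cg$ ($c\in\setR$) is real-rooted. First I would show that the zeros of $f$ and $g$ are simple. If $f$ had a double zero $a$, then $g(a)\neq0$, and $f-cg$ for small $c$ of the appropriate sign would lose the two real roots near $a$, since $f$ has a local extremum value $0$ there. Next, suppose the zeros do not interlace. Then, after possibly swapping $f$ and $g$, some two consecutive zeros $a<a'$ of $f$ have no zero of $g$ between them. On $[a,a']$ the function $R=f/g$ is continuous, vanishes at both ends, and is not identically zero, so it attains a nonzero extremum $c_0$ at an interior point. For $c$ slightly beyond $c_0$, the polynomial $f-cg$ has no roots near that interior point, whereas for $c$ slightly inside the range it has two. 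A root count then shows that some $f-cg$ has fewer real roots than its degree, contradicting real-rootedness. The delicate bookkeeping is to make this count rigorous: one must track the total number of real roots of $f-cg$ as $c$ varies, using that roots move continuously and can only leave the real axis in conjugate pairs through a double root. That tracking is where I expect to spend the most care.
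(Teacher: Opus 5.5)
The paper gives no proof of this proposition (it is quoted from Obreschkoff's book), so I am comparing your plan with the standard argument.

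Your reduction to coprime $f,g$ and your sufficiency direction are sound. The residues of $f/g$ have one sign, and $R$ runs monotonically through all of $\mathbb{R}$ on each bounded interval between consecutive poles. This gives $\deg g-1$ real roots, and any remaining root is real by conjugation. After the gcd reduction, $f\ll g$ already forces simple, strictly interlacing zeros, so the limiting argument is not needed. There are two small slips. First, dividing by $\alpha$ requires $\alpha\neq0$, not $\beta\neq0$. Second, in the degree-gap case you must perturb toward the lower-degree polynomial. If $\deg f\ge\deg g+2$, it is $g+\varepsilon f$ (equivalently $f+Cg$ with $|C|$ large) that fails to be real-rooted, whereas $f+\varepsilon g$ stays real-rooted for small $\varepsilon$ when $f$ has simple zeros.

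The genuine gap is in necessity, exactly at the step you defer.

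\textbf{The stated claim is false as written.} You say that for $c$ slightly beyond $c_0$ the polynomial $f-cg$ ``has no roots near that interior point''. It has no \emph{real} roots there, but by continuity of roots it still has at least two roots near $t_0$. The fact that these are non-real is the contradiction you want.

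\textbf{The proposed global count does not work.} Tracking the total number of real roots as $c$ crosses $c_0$ cannot detect the problem by itself. Suppose $R$ has another critical point $t_1$ with the same critical value $c_0$ (a local minimum, say, while $t_0$ is a local maximum). Then two real roots are born near $t_1$ just as the two near $t_0$ disappear, and the count is unchanged. Your remark about conjugate pairs leaving the axis through a double root is the right mechanism, but it must be applied locally.

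\textbf{The local argument.} Since $g(t_0)\neq0$, $R(t_0)=c_0$ and $R'(t_0)=0$, the point $t_0$ is a zero of $f-c_0g$ of some multiplicity $m\ge2$. Take a small disk $D$ around $t_0$ containing no other zero of $f-c_0g$ and no zero of $g$, with $D\cap\mathbb{R}\subset(a,a')$. By Hurwitz's theorem (or Rouch\'e), $f-cg$ has exactly $m$ zeros in $D$ for $c$ near $c_0$. Its real zeros in $D$ are the solutions of $R(t)=c$ in $D\cap\mathbb{R}$, and there are none once $c$ is just past the extremal value $c_0$. Hence $f-cg$ has non-real zeros, and nothing global has to be tracked.

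\textbf{The simplicity step needs the same local count.} The ``local extremum with value $0$'' argument only handles zeros of even multiplicity. At a zero of odd multiplicity $m\ge3$ (e.g.\ $f=t^3$, $g=t^2-1$ at $a=0$), $f$ is monotone. There you must instead note that $f-cg$ has $m$ zeros near $a$ but $R=c$ has only one real solution nearby.

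\textbf{A cleaner packaging.} The hypothesis forces the Wronskian $f'g-fg'$ to have no real zeros. This excludes multiple zeros outright. Rolle's theorem applied to $f/g$ and $g/f$ then gives strict alternation of the zeros, which also makes your separate degree-gap case unnecessary.
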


The following elementary results about interlacing are popularly invoked as Wagner's lemma and can be seen as refinements of the previous proposition.

\begin{lemma}[{Wagner,~\cite[Section~3]{Wagner1992}}]\label{lem:wagner}
Let $f, g, h \in \setR[t]$ be real-rooted polynomials with only real, non-positive zeros and positive leading coefficients.
Then
\begin{enumerate}                                     \item[(i)] If $h \interl f, g$, then for any $\alpha, \beta \in \mathbb{R}_{\geqslant 0}$, 
  $h \interl \alpha f + \beta g$.  
  \item[(ii)] If $f, g \interl h$, then for any $\alpha, \beta \in \mathbb{R}_{\geqslant 0}$,
   $\alpha f + \beta g \interl h$.                   \item[(iii)] If $f \interl g$, then $g \interl xf$.                           \item[(iv)] If $f \interl g$, then for all $\alpha, \beta \in \mathbb{R}_{\geqslant    
  0}$, $f \interl \alpha f + \beta g \interl g$.      \end{enumerate}
\end{lemma}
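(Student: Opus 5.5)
The statement to prove is Wagner's lemma (Lemma~\ref{lem:wagner}). All four parts concern nonnegative combinations of polynomials with real, non-positive zeros and positive leading coefficients, so I would argue each part by sign counting at the interlacing roots, using the intermediate value theorem.

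\emph{Reformulation.} First I would recast $f \interl g$ in a form suited to sign counting. Suppose $\deg g = d$ and $g$ has simple zeros $b_1 > b_2 > \dots > b_d$. Then $f \interl g$ holds if and only if $f(b_i)$ has sign $(-1)^{i}$ for each $i$, up to zero values.

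Why: interlacing places exactly one zero of $f$ in each gap $[b_{i+1}, b_i]$, and possibly one more below $b_d$ when $\deg f = \deg g$. In addition, $f > 0$ to the right of $b_1$.

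The non-generic cases need care: multiple zeros and common zeros. For these I would use a perturbation and continuity argument. Alternatively, I would first divide out $\gcd(f,g)$, which has non-positive zeros and therefore preserves everything in sight. Handling this reduction cleanly is the main technical annoyance, rather than a conceptual obstacle. So I would state and prove the reduction first.

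\emph{Parts (i) and (ii).} These come from the same sign pattern.
\begin{enumerate}
\item[(i)] Evaluate $F = \alpha f + \beta g$ at the zeros of $h$. Since $h \interl f$ and $h \interl g$, both $f$ and $g$ have the same alternating signs at these points. Hence $F$ does too, and $F$ has positive leading coefficient. Counting sign changes, together with the behavior at $0$ and at $-\infty$ and the degree bound $\deg F \le \deg h + 1$, places the zeros of $F$ in the required interlacing positions.
\item[(ii)] This is symmetric: evaluate $f$ and $g$ at the zeros of $h$, where $f, g \interl h$ gives the opposite alternating pattern.
\end{enumerate}

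\emph{Parts (iii) and (iv).} These are direct consequences.
\begin{enumerate}
\item[(iii)] The zeros of $xf$ are those of $f$ together with $0$. Since $f \interl g$, we have $b_1 \le 0$ and the zeros of $f$ fill the gaps between the $b_i$. Thus the chain $a_i \le b_i \le a_{i-1}$ shifts to the pattern witnessing $g \interl xf$, with $0$ serving as the new largest zero.
\item[(iv)] Take $F = \alpha f + \beta g$. At each zero of $g$, $F$ equals $\alpha f$, which has $f$'s alternating sign pattern; this gives $F \interl g$. At each zero of $f$, $F$ equals $\beta g$, which has the alternating pattern of $g$ at the zeros of $f$; this gives $f \interl F$ by the reformulation applied with roles swapped. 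The degenerate cases $\alpha = 0$ or $\beta = 0$ are trivial.
\end{enumerate}

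Alternatively, and more quickly, every part could be deduced from the Hermite--Kakeya--Obreschkoff characterization (Proposition~\ref{prop:obreschkoff_thm}) combined with the fact that $f \interl g$ is equivalent to $g f' - f g' \le 0$ everywhere on $\setR$ (a Wronskian criterion). With that criterion, parts (i), (ii) and (iv) follow because the Wronskian condition is bilinear and closed under nonnegative combinations. Part (iii) follows by computing the Wronskian of $g$ and $xf$, which equals $x\,(g f' - f g') - f g$. I would present whichever route handles the boundary conventions (the zero polynomial, and degree $0$ versus degree $1$) more cleanly.
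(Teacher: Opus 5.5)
The paper does not prove this lemma; it quotes it from Wagner, so there is no in-paper argument to compare with. Your primary route is the standard argument and is sound in outline: reduce to simple, unshared zeros, then locate zeros by sign changes and the intermediate value theorem. Your zero-listing proof of (iii) is correct as written. However, the reformulation everything rests on is misstated, and the reduction for (i) and (ii) uses the wrong gcd.

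\emph{Fixes to the main route.}

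First, the sign is off by one. With $b_1>\dots>b_d$ and $f\ll g$ in the paper's convention (the largest zero overall is $b_1$), $f(b_i)$ has sign $(-1)^{i-1}$, not $(-1)^{i}$. For example, $f=x+1$ and $g=x$ give $f(0)=1>0$, as your own remark that $f>0$ to the right of $b_1$ forces. The pattern $(-1)^i$ is the one $g$ takes at the zeros of $f$. That dual statement is the one you actually use in (i) and for $f\ll \alpha f+\beta g$ in (iv), so state and prove both versions.

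Second, both versions are equivalences only inside the degree window: $\deg f\in\{d-1,d\}$, respectively $\deg F\in\{\deg h,\deg h+1\}$. For instance, $(x+1)(x+2)$ is positive at the zero of $x$ without interlacing it.

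Third, for (i) and (ii) divide by $c=\gcd(f,g,h)$, not $\gcd(f,g)$. Squeezing in the chain forces a repeated zero of $h$ to be a zero of both $f$ and $g$, so $h/c$ has simple zeros. At each of them, $f/c$ and $g/c$ have the required sign weakly but cannot both vanish. Hence $(\alpha f+\beta g)/c$ is strictly signed there for $\alpha,\beta>0$, and the intermediate value count is exact. With only weak signs, a zero sitting at a zero of $h$ could be counted in two adjacent intervals. You also need the easy fact that deleting or reinserting a common zero in both members of an interlacing pair preserves interlacing. For (iv), dividing by $\gcd(f,g)$ does suffice.

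\emph{The Wronskian alternative.} Drop it; it is not a shortcut, and as written it fails in three places.

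\begin{enumerate}
\item The equivalence $f\ll g\iff gf'-fg'\le 0$ is false without the degree window. Take $f=1$, $g=x^3$: both are real-rooted and $gf'-fg'=-3x^2\le 0$, yet $1\not\ll x^3$ (and indeed $x^3+1$ is not real-rooted).
\item In (i) and (ii), bilinearity gives the Wronskian inequality for $F=\alpha f+\beta g$, but not real-rootedness of $F$. Obreschkoff's theorem does not apply, because the given pairs are $(h,f)$ and $(h,g)$. Concretely, $h=(x+1)^3$ and $F=(x+1)^3-\tfrac{1}{2}$ satisfy $Fh'-hF'=-\tfrac{3}{2}(x+1)^2\le 0$, and $F$ has the right degree and positive coefficients, yet $F$ has nonreal zeros. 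Excluding this requires the reduction to simple zeros of $h$ plus a zero count for $F/h$, which is your main argument again.
\item For (iii), the correct expression is $(xf)g'-g(xf)'=x(fg'-gf')-fg$; your first term has the wrong sign. Its nonpositivity is not visible from $gf'-fg'\le 0$ alone. One way to get it is the expansion $f/g=\kappa+\sum_i c_i/(x-b_i)$ with $\kappa,c_i\ge 0$, which gives $(xf/g)'=\kappa-\sum_i c_ib_i/(x-b_i)^2\ge 0$ because $b_i\le 0$.
\end{enumerate}

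Your direct treatment of (iii) is simpler, so keep the sign-counting route throughout.
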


We will need the following Wagner-type lemma about interlacing with minus signs. 

\begin{lemma}[{Br\"and\'en and Saud,~\cite[Lemma 2.1 (iv)]{BrandenSaud2025ChainGeometric}}]\label{lem:g_interlaces_f-tg}
    Let $f, g$ be monic polynomials with $\deg(f) = \deg(g)+1$ and all real, non-positive roots. Suppose $h = f - t\cdot g$ has positive leading coefficient. If $g \interl f$, then $h$ is real-rooted and $g \interl h \interl f$.
\end{lemma}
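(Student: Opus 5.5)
The plan is to locate the roots of $h$ directly, using sign changes at the roots of $g$ and of $f$. Set $m=\deg g$, so $\deg f=m+1$. Write the roots as $a_{m+1}\le\dots\le a_1\le 0$ for $f$ and $b_m\le\dots\le b_1\le 0$ for $g$. The hypothesis $g\interl f$ then reads
\[
a_{m+1}\le b_m\le a_m\le b_{m-1}\le\dots\le b_1\le a_1\le 0 .
\]
Since $f$ and $g$ are monic and $\deg f=\deg g+1$, the $t^{m+1}$ terms of $f$ and $t\cdot g$ cancel, so $\deg h\le m$. The coefficient of $t^m$ in $h$ is $\sum_j|a_j|-\sum_i|b_i|$, which is nonnegative by interlacing. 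Combined with the hypothesis that $h$ has positive leading coefficient, this gives $\deg h=m$ (apart from the degenerate equality cases discussed below). So it suffices to exhibit $m$ real roots $c_1\ge\dots\ge c_m$ of $h$ in the correct windows.

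I first treat the generic case, where all the above inequalities are strict and $a_1<0$.

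\emph{Evaluation at the roots of $g$.} We have $h(b_i)=f(b_i)=\prod_j(b_i-a_j)$. Exactly the $i$ factors with $j\le i$ are negative, so $\operatorname{sign} h(b_i)=(-1)^i$. Also $h(0)=f(0)=\prod_j(-a_j)>0$.

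\emph{Evaluation at the roots of $f$.} We have $h(a_j)=-a_j\,g(a_j)$. Here $-a_j>0$, and $g(a_j)$ has exactly $j-1$ negative factors, so $\operatorname{sign} h(a_j)=(-1)^{j-1}$.

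By the intermediate value theorem, $h$ has a root in $(b_1,a_1)$, since $h(b_1)<0<h(a_1)$. Moreover, for each $2\le i\le m$ it has a root in $(b_i,a_i)$, since $h(b_i)$ and $h(a_i)$ have opposite signs $(-1)^i$ and $(-1)^{i-1}$. These $m$ intervals are pairwise disjoint and ordered. Since $\deg h=m$, these are all the roots of $h$: we have $c_i\in(b_i,a_i)$ for every $i$, and in particular $h$ is real-rooted with non-positive roots.

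It remains to read off the two interlacings from $c_i\in(b_i,a_i)$ together with $b_i<a_i<b_{i-1}<a_{i-1}$:
\begin{itemize}
\item $g\interl h$ requires $\dots\le b_2\le c_2\le b_1\le c_1\le 0$, which holds since $b_i<c_i<a_i<b_{i-1}$.
\item $h\interl f$ requires $a_{m+1}\le c_m\le a_m\le\dots\le c_1\le a_1\le 0$. This holds since $a_{i+1}<b_i<c_i<a_i$.
\end{itemize}
Thus in the generic case $g\interl h\interl f$, and the positivity of the leading coefficient also rules out the degree dropping.

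The main obstacle is the non-generic situation: coincident roots $a_{i+1}=b_i$ or $b_i=a_i$, the case $a_1=0$, or equality in the coefficient computation. I would handle this by a perturbation argument. Perturb the roots of $f$ and $g$ slightly so that all inequalities become strict and the roots stay non-positive, keeping $f,g$ monic; this keeps $h$'s $t^m$-coefficient positive near the original one. Then pass to the limit: real-rootedness and the interlacing relations are closed conditions under coefficient convergence with fixed degree, by Hurwitz's theorem and the continuity of roots. Alternatively, one can divide out common factors $(t-b_i)$ shared by $f$ and $g$, since $h$ inherits them, and reduce to the strict case directly.
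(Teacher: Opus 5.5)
Your proof is correct. It takes a different route from the paper only in that the paper gives no proof: it imports the lemma verbatim from Br\"and\'en and Saud.

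Your sign computations $\operatorname{sign} h(b_i)=(-1)^i$ and $\operatorname{sign} h(a_i)=(-1)^{i-1}$ are right. The $m$ disjoint windows $(b_i,a_i)$ account for every root of $h$, and both interlacings $g\ll h$ and $h\ll f$ follow exactly as you say. The perturbation-and-limit step is also legitimate. For instance, you can shift the merged chain $a_1\ge b_1\ge a_2\ge\cdots\ge a_{m+1}$ by $x_k\mapsto x_k-k\epsilon$, which makes every inequality strict and every root negative. Compared with the citation, your approach gives a self-contained, elementary argument via the intermediate value theorem. In the generic case it even locates each root of $h$ inside $(b_i,a_i)$.

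One point needs tightening. You list ``equality in the coefficient computation'' among the degenerate cases to be handled by perturbation, but perturbation cannot handle it. Your limit argument needs $\deg h_\epsilon=\deg h=m$, so the original $t^m$-coefficient must be strictly positive. The fix is that this case is vacuous. Write the coefficient as $|a_1|+\sum_{i=1}^{m}\bigl(|a_{i+1}|-|b_i|\bigr)$, a sum of non-negative terms. It vanishes only if $a_1=0$ and $a_{i+1}=b_i$ for all $i$. That means $f=tg$ and so $h=0$, contradicting the hypothesis. So the hypothesis that $h$ has positive leading coefficient amounts exactly to $h\neq 0$, and it forces $\deg h=m$ with positive $t^m$-coefficient. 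With this sentence added, your perturbation argument goes through as written.
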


The following theorem will be a key tool in this work.

\begin{theorem}[{Liu and Wang,~\cite[Theorem 2.6]{Liu2007}}]\label{thm:LW-main-interlacing}               
Let $f$, $g$, $F$, $G$ be four  polynomials in $\mathbb{R}[t]$ with positive leading coefficients that satisfy the following conditions.
\begin{enumerate}                               
      \item[(a)] $F(t) = a(t)f(t) + b(t)g(t)$ and $G(t) = c(t)f(t) + d(t)g(t)$, where $a(t)$, $b(t)$, $c(t)$, $d(t)$ are real polynomials such that $\deg F =  
  \deg G$ or $\deg G + 1$.                        
      \item[(b)] $f, g$ have real zeros, and $g \interl f$.                                 \item[(c)] $\Delta(t) := a(t)d(t) - b(t)c(t) \geq 0$ whenever $G(t) = 0$.       
  \end{enumerate}                                                
Suppose that either $c(t)$ is a positive constant and $\deg G \leq \deg g + 1$, or $d(t)$ is a positive constant and $\deg G \leq \deg f$. Then $F, G$ have real zeros and $G \interl F$.                                
\end{theorem}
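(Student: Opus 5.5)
The plan is a sign-counting argument, carried out in the case where $c(t)$ is a positive constant $c$ and $\deg G \leq \deg g + 1$. The case where $d(t)$ is a positive constant is symmetric, with the roles of $f$ and $g$ exchanged and the degree condition $\deg G\le\deg f$ used instead. I would first prove the generic situation: $g\interl f$ with strict interlacing, no common zeros, all zeros simple, and $\Delta>0$ at the zeros of $G$. The degenerate cases would then follow by a perturbation and continuity argument, using Hurwitz's theorem and the fact that limits of interlacing pairs interlace.

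\textbf{Step 1: $G$ is real-rooted.} Let $s_1>s_2>\dots>s_m$ be the zeros of $g$, so $m=\deg g$. At each $s_i$ we have $G(s_i)=c\,f(s_i)$. Since $g\interl f$ strictly, $f$ has exactly one zero in each interval $(s_{i+1},s_i)$ and one zero in $(s_1,0]$. Hence the values $f(s_i)$ alternate in sign, and so do the values $G(s_i)$. By the intermediate value theorem, $G$ has a zero in each of the $m-1$ intervals $(s_{i+1},s_i)$.

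It remains to place the last one or two zeros. I compare the sign of $G(s_m)$ with the sign of $G$ at $-\infty$, which is determined by $\deg G$ and the positive leading coefficient. I also compare the sign of $G(s_1)$ with the sign of $G$ at $+\infty$ (positive). Because $\deg G\le m+1$, this accounts for all zeros of $G$. The same count shows that the zeros of $g$ and of $G$ alternate in the pattern required for $g\interl G$ or $G\interl g$.

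\textbf{Step 2: $G\interl F$.} Let $r$ be a zero of $G$. Then $f(r)=-d(r)g(r)/c$, and substituting into $F$ gives
\[
F(r)=a(r)f(r)+b(r)g(r)=-\frac{\Delta(r)}{c}\,g(r).
\]
By hypothesis (c), $\Delta(r)\ge 0$, and $\Delta(r)>0$ in the generic case. So $\operatorname{sign}F(r)=-\operatorname{sign}g(r)$. By Step 1, the zeros of $G$ are separated by zeros of $g$, so $g$ alternates in sign along the zeros of $G$, and therefore $F$ does as well. This places one zero of $F$ between each pair of consecutive zeros of $G$.

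The remaining zero or zeros of $F$ come from the degree condition $\deg F\in\{\deg G,\deg G+1\}$ and positivity of leading coefficients. I check the sign of $F$ at the largest zero of $G$ against its sign at $+\infty$, and the sign at the smallest zero against the behaviour at $-\infty$. This yields that $F$ is real-rooted and $G\interl F$.

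\textbf{Main obstacle.} I expect the difficulty to lie in the bookkeeping at the ends of the root sequences and in the degenerate cases, not in the core argument. At the ends, one must check in each subcase ($\deg G=\deg g$ versus $\deg g+1$, and $\deg F=\deg G$ versus $\deg G+1$) that the outermost sign comparisons produce the right number of zeros on the correct side. This is also where the requirement that the zeros be non-positive enters. I would handle it by tabulating the signs of $g$, $G$ and $F$ at their largest zero and at $+\infty$, using the fact that all leading coefficients are positive.

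For the degenerate cases, I would remove common zeros of $f$ and $g$ by factoring them out of $F$ and $G$ simultaneously. Zeros of $G$ with $\Delta=0$ I would handle by perturbation: replace $a$ by $a+\epsilon$ when that keeps $\Delta>0$ at the zeros of $G$, then pass to the limit $\epsilon\to0$.
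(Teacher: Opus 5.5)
The paper gives no proof of this statement; it is quoted from Liu and Wang. Your generic argument is correct and is essentially the standard sign-counting proof. For $c>0$ constant, Step~1 yields strict interlacing $g\interl G$. It is this direction, since $G(s_1)=cf(s_1)<0$ forces a zero of $G$ above $s_1$. The identity $cF=aG-\Delta g$ then makes $F$ negative at the largest zero of $G$ and alternate along its zeros. For $d>0$ constant you get $G\interl f$ instead and $dF=bG+\Delta f$, with the same outcome. Factoring out common zeros of $f$ and $g$ is also fine.

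The step that fails as written is your treatment of zeros of $G$ with $\Delta=0$, in the case you actually carry out, $c>0$ constant. Replacing $a$ by $a+\varepsilon$ turns $\Delta$ into $\Delta+\varepsilon d$, and $d$ has no sign at the zeros of $G$. So your conditional ``when that keeps $\Delta>0$'' leaves this case open, and the perturbation can in fact be wrong. Take $f=t+1$, $g=1$, $a=1$, $b=-1$, $c=1$, $d=-1$. These satisfy every hypothesis, with $G=F=t$ and $\Delta\equiv 0$. Yet $F_\varepsilon=(1+\varepsilon)t+\varepsilon$ has its zero at $-\varepsilon/(1+\varepsilon)<0$, so $G\interl F_\varepsilon$ fails.

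The correct perturbation here is $b\mapsto b-\varepsilon$, i.e.\ $F_\varepsilon=F-\varepsilon g$, which gives $\Delta_\varepsilon=\Delta+\varepsilon c>0$. Step~1 gives $\deg F\ge\deg G\ge\deg g$, so $F_\varepsilon$ keeps its degree and a positive leading coefficient for small $\varepsilon$. The generic case then applies, and the limit $\varepsilon\to 0^+$ works.

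When $d>0$ is the constant, $a\mapsto a+\varepsilon$ is the right choice. However, $\deg F_\varepsilon=\deg F+1$ can occur, namely when $\deg f=\deg F+1$. The extra zero escapes to $-\infty$, so the interlacing survives the limit, but you need to say this.

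Finally, non-positivity of the zeros plays no role in the argument and cannot be derived from the hypotheses. Take $f=t$, $g=1$, $a=1$, $b=-2$, $c=1$, $d=-1$: all hypotheses hold, with $G=t-1$ and $F=t-2$. So the conclusion $G\interl F$ must be read in Liu--Wang's sign-free sense of interlacing. Your perturbed polynomials leave the non-positive range anyway, e.g.\ $F-\varepsilon g=t-\varepsilon$ in the first example. In the paper's applications, non-positivity of the zeros comes separately, from non-negative coefficients.
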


\section{Narayana, Delannoy, and generalized snake posets}
\label{sec:polynomials}

In this section, we develop technical results that we will require regarding Narayana polynomials and $h^*$-polynomials for order polytopes of generalized snake posets.
We also give a simplified proof of a result of Lee, Vindas-Mel\'endez, and Wang regarding Delannoy polynomials.

Recall that the Narayana numbers and polynomials are defined by 
\[
N(n, k) := \frac{1}{n}\binom{n}{k}\binom{n}{k-1} \quad  \text{for $1 \leq k \leq n$} \quad \text{and} \quad N_{n}(t) := \sum_{k=1}^{n}N(n, k)t^{k}\, ,
\]
with $N(0, 0) :=1$. 
We consider the following minor modification of the Narayana polynomials, so that the constant term is equal to $1$: 
\[
P_{n}(t) := t^{-1} N_{n+1}(t).
\]
As shown by Alexandersson and Jal~\cite[Example 2.4]{AlexanderssonJal2024RookMatroid}, the $P_{n}$ polynomials are the non-nesting rook polynomials corresponding to staircase shapes $\delta_{n}$ (and the conjugate shape $\delta_{n}'$).
These polynomials satisfy the following known polynomial version of log-concavity.

\begin{lemma}\label{lem:NarayanaTuranInequality}
  For each $n \geq 1$ and $t \leq 0$, \[
  P_{n}(t)^{2} - P_{n-1}(t)P_{n+1}(t) \geq 0.
  \]  
\end{lemma}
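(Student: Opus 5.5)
The plan is to argue by induction on $n$, using the classical three-term recurrence for the Narayana polynomials. In terms of $N_n$ this recurrence reads
\[
(n+1)N_{n}(t) = (2n-1)(1+t)N_{n-1}(t) - (n-2)(1-t)^2N_{n-2}(t) \qquad (n\ge 3).
\]
The first step is to shift indices and divide by $t$, which turns it into a recurrence for $P_n = t^{-1}N_{n+1}$ of the form
\[
(n+2)P_{n}(t) = (2n+1)(1+t)P_{n-1}(t) - (n-1)(1-t)^2P_{n-2}(t).
\]
I would verify this against $P_0=1$, $P_1 = 1+t$ and $P_2 = 1+3t+t^2$. The recurrence itself is standard and can be cited or checked from the explicit formula for $N(n,k)$.

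Set $D_n(t) := P_n(t)^2 - P_{n-1}(t)P_{n+1}(t)$. The second step is to substitute the recurrence for $P_{n+1}$ into $D_n$:
\[
(n+3)D_n = (n+3)P_n^2 - (2n+3)(1+t)P_nP_{n-1} + n(1-t)^2P_{n-1}^2 .
\]
I then want to rewrite the right-hand side as a nonnegative multiple of $D_{n-1}$ plus a manifestly nonnegative remainder. To do this, I eliminate the mixed term by applying the recurrence once more, now expressing $(1+t)P_{n-1}$ through $P_n$ and $P_{n-2}$. This leaves a quadratic form in $P_n$ and $P_{n-2}$, together with $P_{n-1}^2$ and $P_nP_{n-2}$, and these should regroup into $D_{n-1} = P_{n-1}^2 - P_{n-2}P_n$. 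The target identity has the shape
\[
(n+3)D_n = \alpha_n (1-t)^2 D_{n-1} + \beta_n\,\bigl(P_n - \gamma_n(t) P_{n-1}\bigr)^2 ,
\]
or something close to it, with $\alpha_n,\beta_n \ge 0$. Since $(1-t)^2\ge 0$ for every real $t$, induction from the base case $D_1 = (1+t)^2 - (1+3t+t^2) = -t$, which is nonnegative for $t\le 0$, would then finish the proof. The hypothesis $t \le 0$ is used exactly in the base case, and possibly in signs such as $-t\ge 0$ appearing in the remainder term.

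The main obstacle is locating the correct regrouping; it may not be a perfect square, but instead contain a term like $-t\cdot(\text{square})$. My first attempt is to compute $D_n$ explicitly for small $n$ and guess the identity. For example, $D_1=-t$ and
\[
D_2 = (1+3t+t^2)^2 - (1+t)(1+6t+6t^2+t^3) = -t(1+t)^2 .
\]
This suggests that $D_n = -t\cdot Q_n(t)$ with $Q_n$ a sum of squares, or with $Q_n$ having nonnegative coefficients in suitable variables. If that approach stalls, the fallback is the interlacing route. The $P_n$ are real-rooted with $P_{n-1}\interl P_n$ (this follows from the recurrence via Theorem~\ref{thm:LW-main-interlacing}). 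For consecutive members of such an interlacing sequence, a Laguerre/Wronskian-type argument gives $P_n'P_{n-1} - P_nP_{n-1}' \ge 0$. I would then try to combine this with the recurrence to control the sign of $D_n$ on $(-\infty,0]$.
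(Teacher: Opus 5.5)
\textbf{Where the proposal falls short.} Your strategy is an induction on $n$ through the three-term recurrence. This differs from the paper, which writes $P_n(t)=\frac{1}{n+1}(t-1)^nJ_n^{(1,1)}\bigl(\frac{t+1}{t-1}\bigr)$ and imports Gasper's Tur\'an inequality for normalized Jacobi polynomials on $[-1,1]$. Your route can be made to work, but the proposal stops at the decisive step, and the recipe you describe does not give a usable identity. Suppose you eliminate the entire mixed term $(2n+3)(1+t)P_nP_{n-1}$ with the recurrence $(n+2)P_n=(2n+1)(1+t)P_{n-1}-(n-1)(1-t)^2P_{n-2}$ and regroup. You get
\[
(2n+1)(n+3)D_n=(2n+3)(n-1)(1-t)^2D_{n-1}+3\bigl[(1-t)^2P_{n-1}^2-P_n^2\bigr].
\]
The remainder is not nonnegative on $t\le 0$: for $n=2$, $t=-1$ it equals $-3$, since $P_1(-1)=0$ and $P_2(-1)=-1$. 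So the induction stalls.

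Your test data is also wrong. We have $P_2^2=1+6t+11t^2+6t^3+t^4$ and $P_1P_3=1+7t+12t^2+7t^3+t^4$, so $D_2=-t(1+t+t^2)$, not $-t(1+t)^2$. Guessing an identity from that would mislead you. The Wronskian fallback, as stated, gives no handle on the sign of $D_n$.

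\textbf{The missing idea.} Apply the recurrence to only \emph{part} of the mixed term. In your expression for $(n+3)D_n$, write $n+3=(n+2)+1$, $2n+3=(2n+1)+2$ and $n=(n-1)+1$. By the recurrence, the first pieces give
\[
P_n\bigl[(n+2)P_n-(2n+1)(1+t)P_{n-1}\bigr]+(n-1)(1-t)^2P_{n-1}^2=(n-1)(1-t)^2D_{n-1}.
\]
The leftover pieces are $P_n^2-2(1+t)P_nP_{n-1}+(1-t)^2P_{n-1}^2$. Hence for $n\ge 2$,
\[
(n+3)D_n=(n-1)(1-t)^2D_{n-1}+\bigl(P_n-(1+t)P_{n-1}\bigr)^2-4t\,P_{n-1}^2 .
\]
Conceptually, after eliminating $P_{n+1}$ and $P_{n-2}$, both $(n+3)D_n$ and $(n-1)(1-t)^2D_{n-1}$ are binary quadratic forms in $(P_n,P_{n-1})$. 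Their difference has leading coefficient $1$ and discriminant $16t\le 0$.

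This is exactly your predicted shape, with $\alpha_n=n-1$, $\beta_n=1$, $\gamma_n=1+t$, plus an extra $-t\cdot(\text{square})$ term. Together with $D_1=-t$ it closes the induction for $t\le 0$. As a check, it reproduces $D_2$ and $D_3=-t(1+3t+6t^2+3t^3+t^4)$.

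\textbf{Comparison with the paper.} With this identity supplied, your proof is elementary and self-contained. It uses only the recurrence~\eqref{eq:narayana-three-term}, which the paper already needs for Lemma~\ref{lem:weighted-narayana-interlacing}, and it even yields $D_n>0$ for $t<0$. The paper's proof is shorter on the page but rests on an external theorem about Jacobi polynomials.
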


\begin{proof}
   Let $J_{n}^{(\alpha, \beta)}(t)$ denote the Jacobi polynomials with parameters $\alpha, \beta$ \cite[pg. 254]{Rainville1960SpecialFunctions}. It is a well-known fact that the Narayana polynomials can be obtained as a specialization of the Jacobi polynomials as follows: \[
   P_{n}(t) = \dfrac{1}{n+1}(t-1)^{n}J_{n}^{(1, 1)}\left (\frac{t+1}{t-1} \right).
   \] 
   Renormalize the Jacobi polynomials by setting $R_{n}^{(\alpha, \beta)}(t) = \frac{1}{J_{n}^{(\alpha, \beta)}(1)}J_{n}^{(\alpha, \beta)}(t)$. Then, for $t \leq 0$, set $x = \frac{t+1}{t-1} \in [-1, 1]$ and consider 
\[
 P_{n}(t)^{2} - P_{n-1}(t)P_{n+1}(t) = (t-1)^{2n}(R_{n}^{(\alpha, \beta)}(x)^{2} - R_{n-1}^{(\alpha, \beta)}(x)R_{n+1}^{(\alpha, \beta)}(x)),
\] 
where $(\alpha, \beta) = (1,1)$. By work of Gasper~\cite{Gasper1972TuranInequalityJacobi}, the non-negativity of the bracketed expression in the right-hand side above holds for all $\alpha, \beta > -1$ and $x \in [-1, 1]$.
The result follows. 
\end{proof}

We will be dealing with the modified Narayana polynomials and their differences, which naturally arise in the context of non-nesting rook polynomials of truncated staircases. 
For $i \geq 0$, let $\mu_{n, i} := (n, n-1, \ldots , n-i+1)$ be a \defin{truncated staircase shape.} 
When $i =0$, we treat $\mu_{n, i}$ as the empty partition.
Note that the non-nesting rook polynomial $M_{\mu_{n, i}}(t)$ has degree $i$, and that $M_{\mu_{n, n}}(t)=P_n(t)$.

The following proposition gives a recurrence for the non-nesting rook polynomial when the first column of a board is deleted, or equivalently, when each part is reduced by one.

\begin{proposition}\label{prop:columndeletion}
    Let $\lambda$ be a partition with $n$ parts. We have \begin{equation}\label{eq:rook_rec}
M_{\lambda}(t) = M_{\lambda - \mathbf{1}}(t) +t\sum_{i=0}^{n-1}M_{(\lambda - \mathbf{1})^{(i)}}(t),
\end{equation}

\noindent where $\mathbf{1}$ denotes the all-ones vector and $\mu^{(i)}$ denotes the partition obtained from $\mu$ by truncating to its first $i$ parts when $i \geq 1$, and the empty partition for $i = 0.$
\end{proposition}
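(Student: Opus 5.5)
Partition $\NN(\lambda)$ according to whether the first column (column $1$) contains a rook. Column $1$ consists of the cells $(r,1)$ for $r=1,\dots,n$, since every part of $\lambda$ is positive.

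\emph{Placements with no rook in column $1$.} Deleting column $1$ and shifting the remaining cells one column to the left identifies the board with $\lambda-\mathbf{1}$. Some rows may become empty; that is harmless for rook placements, so we read $M_{\lambda-\mathbf 1}$ with zero parts allowed. The shift preserves relative positions of rooks, hence both the non-attacking and the non-nesting conditions. These placements therefore contribute $M_{\lambda-\mathbf{1}}(t)$.

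\emph{Placements with a rook at $(r,1)$.} Write $r=i+1$ with $0\le i\le n-1$. Since no other rook may lie in row $r$ or column $1$, the other rooks lie in columns $\ge 2$ and in rows $\ne r$. The non-nesting condition with the rook at $(r,1)$ forbids any rook strictly below and to the right of it. Every cell in rows $>r$ and columns $\ge2$ is such a cell, so rows $r+1,\dots,n$ are empty. A rook strictly above $(r,1)$ is either above-left or above-right of it, and the above-right case is not a nesting. Indeed, a nesting needs one rook below and to the right of the other, and a rook above and to the right of $(r,1)$ is neither below-right nor above-left of it. Hence the other rooks form an arbitrary non-nesting placement on rows $1,\dots,i$, columns $\ge 2$. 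After shifting left by one column this is exactly the board $(\lambda-\mathbf{1})^{(i)}$, the first $i$ rows of $\lambda-\mathbf 1$, and it is empty when $i=0$.

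Non-nesting among the remaining rooks is unaffected by the shift, and the pair conditions with the new rook were fully accounted for above. Thus the map that adds the rook $(i+1,1)$ is a bijection from $\NN((\lambda-\mathbf 1)^{(i)})$ onto this class, and it raises the size by one. This class therefore contributes $tM_{(\lambda-\mathbf 1)^{(i)}}(t)$. Summing over $i$ and the two cases gives \eqref{eq:rook_rec}.

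The only delicate point is the geometry of the nesting relation. Rooks strictly below $(r,1)$ are automatically to its right, so they are excluded. Rooks strictly above $(r,1)$ are also to its right, and they are allowed. I expect no real obstacle beyond stating this orientation carefully, in English convention with row $1$ on top.
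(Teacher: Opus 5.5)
Your proof is correct and follows the paper's argument: you split by whether column $1$ holds a rook, delete that column in the first case, and in the second case use the non-nesting condition to force every row below the rook to be empty, which leaves an arbitrary non-nesting placement on $(\lambda-\mathbf{1})^{(i)}$. You also check explicitly that the correspondence is a bijection and that the rooks above and to the right of the new rook are allowed; the paper leaves both of these points implicit.
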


\begin{proof}
Let $\NN_{k}(\lambda)$ denote the set of non-nesting rook placements on $\lambda$ of size $k$ and $r_{k}^{\lambda} = |\NN_{k}(\lambda)|$. It suffices to show that \[
r_{k}^{\lambda} = r_{k}^{\lambda - \mathbf{1}} + \sum_{i=0}^{n-1}r_{k-1}^{(\lambda - \mathbf{1})^{(i)}}.
\]

\noindent Let $\rho$ be an element in $\NN_{k}(\lambda)$. By considering the occupancy of the first column of $\rho$, either:

\begin{enumerate}
    \item There is no rook in the first column of $\rho$. Delete the first column of $\rho$ to get an element $\rho' \in \NN_{k}(\lambda - \mathbf{1})$. Alternatively, 
    \item There is a rook in the first column of $\rho$ at row $i$ for some $1 \leq i \leq n$. In this case, delete $\{(m, j) \in \lambda: m \geq i\}$ from $\rho$ to get an element $\rho'' \in \NN_{k-1}((\lambda - \mathbf{1})^{(i-1)})$.
\end{enumerate}
    
\noindent The first case contributes a term of $r_{k}^{\lambda - \mathbf{1}}$, while the second case contributes a term of $\sum_{i=0}^{n-1}r_{k-1}^{(\lambda - \mathbf{1})^{(i)}}$. This completes the proof.
\end{proof}

Define $G_{n}(t) := \sum_{i=0}^{n-1}M_{\mu_{n, i}}(t)$, and note that $G_{n}$ has degree $n-1$. Proposition~\ref{prop:columndeletion} relates $G_{n}$ and $P_{n}$ by \begin{equation}\label{eq:g_relating_p}
t\cdot G_{n-1} = P_{n} - (1+t)P_{n-1}, 
\end{equation}

\noindent which allows for the following simple interlacing to be noted.

\begin{lemma}\label{lem:g_interlaces_p}
    For all $n \geq 1$, $G_{n} \interl P_{n}$. 
\end{lemma}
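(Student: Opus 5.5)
The plan is to rewrite equation~\eqref{eq:g_relating_p} (with $n$ replaced by $n+1$) in the form $P_{n+1} - (1+t)P_n = t\,G_n$ and prove the interlacing by induction on $n$. The base case $n=1$ is immediate: $G_1 = M_{\mu_{1,0}} = 1$ is a constant, and $P_1 = 1+t$ is linear, so $G_1 \interl P_1$ holds by the convention that degree-$0$ polynomials interlace degree-$1$ polynomials.

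A more direct route avoids any recursion for $G_n$. Since $\deg P_n = n$ and $\deg G_n = n-1$, and $G_n$ has positive coefficients, the standard sign-alternation criterion applies. We know $P_n$ is real-rooted with $n$ distinct negative zeros $a_1 > \dots > a_n$; this is classical for Narayana polynomials and also follows from the Jacobi specialization in Lemma~\ref{lem:NarayanaTuranInequality}, since Jacobi zeros are simple and lie in $(-1,1)$. It then suffices to show that $G_n(a_j)$ alternates in sign as $j$ varies, starting positive at $a_1$, i.e.\ $\operatorname{sign} G_n(a_j) = (-1)^{j-1}$. This forces a zero of $G_n$ in each interval $(a_{j+1},a_j)$, giving all $n-1$ zeros of $G_n$ with the required positions, which is $G_n \interl P_n$.

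To compute these signs, evaluate \eqref{eq:g_relating_p} at $t = a_j$. Using $P_n(a_j)=0$,
\[
a_j\, G_n(a_j) = P_{n+1}(a_j).
\]
Because $a_j<0$, the sign of $G_n(a_j)$ is the opposite of the sign of $P_{n+1}(a_j)$. So everything reduces to the interlacing $P_n \interl P_{n+1}$ of consecutive Narayana polynomials, which gives $\operatorname{sign} P_{n+1}(a_j) = (-1)^j$. This interlacing is classical: it follows from the three-term recurrence of Jacobi polynomials, that is, the orthogonal polynomial structure behind the specialization in Lemma~\ref{lem:NarayanaTuranInequality}.

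If one prefers to stay inside the paper's tools, argue by induction on $n$. Suppose $P_{n-1}\interl P_n$ and $G_{n-1}\interl P_{n-1}$. Then \eqref{eq:g_relating_p} gives $P_n = (1+t)P_{n-1} + tG_{n-1}$. Combining Lemma~\ref{lem:wagner}(iii) and (ii) shows $P_{n-1} \interl (1+t)P_{n-1}$ and $P_{n-1} \interl tG_{n-1}$, hence $P_{n-1}\interl P_n$; then proceed as above.

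The main obstacle is justifying $P_n\interl P_{n+1}$ rigorously, including strictness at the zeros. I would cite the orthogonal polynomial interlacing for $J^{(1,1)}_n$ and transport it through the Möbius change of variables $x=(t+1)/(t-1)$. This map is order-preserving on $t<0$, since $dx/dt = -2/(t-1)^2<0$ reverses orientation while the interval endpoints are also swapped, so ordering must be checked. Strict interlacing is then preserved under the transport.
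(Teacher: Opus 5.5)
Your main (``direct'') argument is correct, but it is organized differently from the paper's. The paper applies Lemma~\ref{lem:g_interlaces_f-tg} with $f=P_{n+1}$, $g=P_n$ to get $P_n\interl P_{n+1}-tP_n=P_n+tG_n\interl P_{n+1}$. It then passes to $P_n\interl tG_n$ via Proposition~\ref{prop:obreschkoff_thm}, and to $G_n\interl P_n$ via Lemma~\ref{lem:wagner}.

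You skip the intermediate polynomial $P_n+tG_n$. You evaluate $tG_n=P_{n+1}-(1+t)P_n$ at the zeros $a_j<0$ of $P_n$, which gives $\operatorname{sign}G_n(a_j)=-\operatorname{sign}P_{n+1}(a_j)=(-1)^{j-1}$. The intermediate value theorem then places the $n-1$ zeros of $G_n$ one in each gap $(a_{j+1},a_j)$.

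Both proofs use the same inputs: \eqref{eq:g_relating_p} and the classical interlacing $P_n\interl P_{n+1}$. Yours is self-contained and reads off the direction of interlacing directly. The cost is that it needs simple zeros of $P_n$ and strict interlacing with $P_{n+1}$. Both hold here. Strictness even follows in one line from \eqref{eq:narayana-three-term}: a common zero $r\le 0$ of $P_n$ and $P_{n+1}$ forces $P_{n-1}(r)=0$ (as $(1-r)^2\neq 0$), and descending gives $P_0(r)=0$, a contradiction.

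The paper's weak-interlacing route pays off later. The identical maneuver closes the proof of Theorem~\ref{thm:interlacing_of_Ms}, where $U_{m+1}=(1+t)U_m+tV_m$ and $U_m\interl U_{m+1}$ give $V_m\interl U_m$. There strictness genuinely fails: for $\lambda=0$, $\nu=-1$, both $U_m=P_m-P_{m-1}$ and $U_{m+1}=P_{m+1}-P_m$ vanish at $t=0$. So your sign argument would need modification in that setting.

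Two side remarks in your write-up should be dropped or corrected.

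First, the paragraph that ``stays inside the paper's tools'' is circular: it assumes $P_{n-1}\interl P_n$ and concludes $P_{n-1}\interl P_n$. More fundamentally, \eqref{eq:g_relating_p} only ties together the triple $(P_n,G_n,P_{n+1})$. Lemma~\ref{lem:wagner} turns $G_n\interl P_n$ into $P_n\interl P_{n+1}$, and your sign argument turns $P_n\interl P_{n+1}$ back into $G_n\interl P_n$, so no step advances $n$. A real induction would need extra input such as \eqref{eq:narayana-three-term}. Since your main argument simply cites $P_n\interl P_{n+1}$, as the paper does, that paragraph and the opening announcement of an induction should go.

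Second, $t\mapsto (t+1)/(t-1)$ maps $(-\infty,0)$ onto $(-1,1)$ order-reversingly, not order-preservingly as you wrote. This is harmless. Reversing the line preserves the pattern in which zeros of the degree-$(n+1)$ polynomial sit at both ends and alternate with those of the degree-$n$ polynomial.
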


\begin{proof}
    It is well-known that the $P_{n}$ are real-rooted and $P_{n} \interl P_{n+1}$~\cite{Brenti1989UnimodalAMS, Liu2007}. Since $P_{n+1}-t \cdot P_{n}$ clearly has positive coefficients by (\ref{eq:g_relating_p}), we can use Lemma~\ref{lem:g_interlaces_f-tg} to conclude that \[
    P_{n} \interl P_{n} + t \cdot G_{n} \interl  P_{n+1}.
    \]
    Since $P_{n} \interl P_{n} + t\cdot G_{n}$,  by Proposition~\ref{prop:obreschkoff_thm} we also have $P_{n} \interl t \cdot G_{n}$, which by Lemma~\ref{lem:wagner} implies that $G_{n} \interl P_{n}$.
\end{proof}

We will use the following technical lemma about the interlacing of Narayana polynomials.
\begin{lemma}\label{lem:weighted-narayana-interlacing}
    For all $m \geq 2$, \[
(\lambda t + \nu)P_{m-1} + P_{m} \interl (\lambda t + \nu)P_{m} + P_{m+1} \quad \text{for all $\lambda \geq 0, \nu \geq -1$.}
\]
\end{lemma}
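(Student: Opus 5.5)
The plan is to apply the Liu--Wang criterion, Theorem~\ref{thm:LW-main-interlacing}, with $f=P_m$ and $g=P_{m-1}$. Hypothesis (b), $P_{m-1}\interl P_m$, is the known interlacing already used in the proof of Lemma~\ref{lem:g_interlaces_p}. We take
\[
G=(\lambda t+\nu)P_{m-1}+P_m, \qquad F=(\lambda t+\nu)P_m+P_{m+1}.
\]
To write $F$ in terms of $f$ and $g$, I will use the classical three-term recurrence for the modified Narayana polynomials,
\[
(m+2)P_{m+1}=(2m+1)(1+t)P_m-(m-1)(1-t)^2P_{m-1}.
\]
It follows from the standard recurrence for $N_n$ after shifting indices and dividing by $t$.

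Write $\alpha=\frac{2m+1}{m+2}$ and $\beta=\frac{m-1}{m+2}$. Then $G=c f+d g$ with $c=1$ and $d=\lambda t+\nu$, and $F=af+bg$ with
\[
a=\lambda t+\nu+\alpha(1+t), \qquad b=-\beta(1-t)^2 .
\]
The constant $c=1$ is positive, which is the case of Theorem~\ref{thm:LW-main-interlacing} we will use. The degree conditions need checking. $G$ has leading coefficient $\lambda+1>0$ and degree $m=\deg g+1$. $F$ has positive leading coefficient and degree $m+1$. So $\deg F=\deg G+1$ and $\deg G\le \deg g+1$, as required.

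The main step is condition (c), that $\Delta=ad-bc\ge 0$ at every zero of $G$. I will first show that all zeros of $G$ are non-positive. Rewrite
\[
G=\lambda tP_{m-1}+(\nu+1)P_{m-1}+(P_m-P_{m-1}).
\]
Here $\nu+1\ge0$, and $P_m-P_{m-1}$ has nonnegative coefficients because $N(n,k)$ is increasing in $n$. Hence $G$ has nonnegative coefficients and no positive zeros.

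Now let $t_0\le 0$ be a zero of $G$. If $P_{m-1}(t_0)=0$, then $P_m(t_0)=0$ as well. This cannot happen: by the recurrence it would force all $P_k(t_0)=0$, down to $P_0=1$. So $P_{m-1}(t_0)\neq 0$, and $\lambda t_0+\nu=-P_m(t_0)/P_{m-1}(t_0)$. Substituting this and multiplying by $P_{m-1}^2$ gives, at $t_0$,
\[
\Delta\, P_{m-1}^2 = P_m^2-P_{m-1}\bigl(\alpha(1+t)P_m-\beta(1-t)^2P_{m-1}\bigr)=P_m^2-P_{m-1}P_{m+1},
\]
where the last step is the recurrence again. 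This is nonnegative for $t_0\le 0$ by the Turán-type inequality of Lemma~\ref{lem:NarayanaTuranInequality}. So $\Delta(t_0)\ge 0$.

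Theorem~\ref{thm:LW-main-interlacing} then gives that $F$ and $G$ are real-rooted with $G\interl F$, which is the claim. The hypothesis $m\ge2$ ensures $P_{m-1}$ is nonconstant and the recurrence applies at index $m$. The main obstacle I expect is confirming the exact normalization of the Narayana recurrence for $P_n$, since the whole computation depends on it. A secondary point is that non-positivity of the zeros of $G$ genuinely needs $\nu\ge -1$.
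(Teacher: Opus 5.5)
Your strategy is the paper's own: the same $f=P_m$, $g=P_{m-1}$, $F$, $G$ with $c=1$, $d=\lambda t+\nu$ in Theorem~\ref{thm:LW-main-interlacing}, and the same treatment of condition (c). That is, you show $G$ has nonnegative coefficients, substitute $\lambda t_0+\nu=-P_m(t_0)/P_{m-1}(t_0)$, collapse $\Delta(t_0)P_{m-1}(t_0)^2$ to $P_m^2-P_{m-1}P_{m+1}$, and invoke Lemma~\ref{lem:NarayanaTuranInequality}.

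However, the three-term recurrence you wrote is false, exactly at the point you flagged. Your formula is the $N_n$-recurrence at $n=m+1$ with each $N_j$ replaced by $P_j$. Since $P_j=t^{-1}N_{j+1}$, the subscripts must shift. At $m=2$, for instance, your identity reads $4P_3=5(1+t)P_2-(1-t)^2P_1$. The left side is $4+24t+24t^2+4t^3$, while the right side is $4+21t+21t^2+4t^3$. So your $a,b$ do not satisfy $F=af+bg$, and condition (a) fails as written. Applying $(n+1)N_n=(2n-1)(1+t)N_{n-1}-(n-2)(1-t)^2N_{n-2}$ with $n=m+2$ gives the correct recurrence:
\[
(m+3)P_{m+1}=(2m+3)(1+t)P_m-m(1-t)^2P_{m-1},
\]
so $\alpha=\frac{2m+3}{m+3}$ and $\beta=\frac{m}{m+3}$.

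The repair costs nothing, because the rest of your argument uses the recurrence only as an identity with unspecified $\alpha,\beta$. The formula for $\Delta$, the substitution at a root of $G$, and the collapse to $P_m^2-P_{m-1}P_{m+1}$ all go through verbatim with the corrected constants, and the result is then precisely the paper's proof.

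Your descent argument for $P_{m-1}(t_0)\neq0$ also works with the corrected recurrence. At index $k\ge1$, the coefficient of $P_{k-1}$ is $k(1-t_0)^2$, which is nonzero since $t_0\le0$; you should say this explicitly. Finally, deriving the nonnegativity of the coefficients of $P_m-P_{m-1}$ from the monotonicity of $N(n,k)$ in $n$ is a valid alternative to the paper's appeal to \eqref{eq:g_relating_p}.
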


\begin{proof}
   Fix $m \geq 2$ and $\lambda \geq 0, \nu \geq -1$. We will establish this using Theorem~\ref{thm:LW-main-interlacing}. Recall that we have the following three-term recurrence for the Narayana polynomials~\cite{coker2003,sulankeparallel}: 
    \begin{equation}\label{eq:narayana-three-term}
(m+3)P_{m+1}(t) = (2m+3)(1+t)P_{m}(t) - m(1-t)^2P_{m-1}(t), 
\end{equation}
with $P_{0}(t) =1, P_{1}(t) = 1+t$. Using the notation of Theorem~\ref{thm:LW-main-interlacing}, we set 
  \begin{align*}
      f &= P_m, & g &= P_{m-1}, \\
      F &= (\lambda t + \nu) P_m + P_{m+1}, & G &= (\lambda t + \nu) P_{m-1} + P_m, \\
      a(t) &= \frac{(m+3)(\lambda t + \nu) + (2m+3)(1+t)}{m+3}, & b(t) &= -\frac{m(1-t)^2}{m+3}, \\
      c(t) &= 1, & d(t) &= \lambda t + \nu.
  \end{align*}
  By~(\ref{eq:narayana-three-term}), we can write $F(t) = a(t)f(t) +b(t)g(t)$, while $G(t) = c(t)f(t)+d(t)g(t)$ is immediate.
  This verifies condition \textit{(a)} of Theorem~\ref{thm:LW-main-interlacing}. Condition \textit{(b)} requires that $g \interl f$, which holds since $P_m$ the modified Narayana polynomials form an interlacing sequence. 
  Condition \textit{(c)} requires us to verify that $\Delta(t)$ is non-negative at every real root of $G$. We first compute  \[                                  
      \Delta(t) = a(t)\,d(t) - b(t)\,c(t) \;=\; \frac{(m+3)(\lambda t + \nu)^2 + (2m+3)(1+t)(\lambda t + \nu) + m(1-t)^2}{m+3}.
  \] 
  Now $G(t)$ can be written as $G(t) = \lambda tP_{m-1} + (\nu+1)P_{m-1} + (P_{m}-P_{m-1})$, where $P_{m}-P_{m-1}$ has non-negative coefficients from~(\ref{eq:g_relating_p}). Since $\lambda \geq 0$ and $ \nu \geq -1$, $G(t)$ also has non-negative coefficients and hence any real root of $G$ must be non-positive. Let $r \leq 0$ be some root of $G$ so that $(\lambda r + \nu)P_{m-1}(r) = -P_{m}(r)$. Substituting this in $(m+3)\Delta(r)P_{m-1}(r)^{2}$, we get the following chain of equalities: \begin{align*}
      (m+3)\Delta(r)P_{m-1}(r)^{2}&= (m+3)P_{m}(r)^{2} - (2m+3)(1+r)P_{m}(r)P_{m-1}(r) + m(1-r)^{2}P_{m-1} (r)^{2} \\ & = (m+3)P_{m}(r)^{2} - P_{m-1}(r)((2m+3)(1+r)P_{m}(r) - m(1-r)^{2}P_{m-1}(r)) \\&=
      (m+3)[P_{m}(r)^{2} - P_{m-1}(r)P_{m+1}(r)].
      \end{align*}

In the first equality, we performed the aforementioned substitution. In the second equality, we rearranged terms. In the third equality, we used~(\ref{eq:narayana-three-term}) again. 

 Since consecutive $P_{n}$ have no zeros in common and $(\lambda r + \nu)P_{m-1}(r) = -P_{m}(r)$, we have that $P_{m-1}(r) \neq 0$. Thus, from the displayed equation above, it follows that $\Delta(r) \geq 0$ if and only if $P_{m}(r)^{2} - P_{m-1}(r)P_{m+1}(r) \geq 0$ for $r$ a root of $G$. 
 Since the latter inequality holds for all non-positive $r$ by Lemma~\ref{lem:NarayanaTuranInequality}, it follows that condition \textit{(c)} is also verified. Since $c(t)$ is a positive constant and $\deg G \leq \deg g +1$, we may apply Theorem~\ref{thm:LW-main-interlacing} to get that $G, F$ are real-rooted and satisfy $G \interl F$, as desired.  
\end{proof}

We now turn our attention to connecting Narayana polynomials with generalized snake posets. An equivalent version of the following recurrence was derived by Lee, Vindas-Meléndez, and Wang~\cite[Theorem 4.14]{lee2024generalizedsnakeposetsorder}. 
However, in the original form, the terms in the right-hand side were not manifestly positive, and the use of non-nesting rook polynomials simplifies the proof considerably. We take this opportunity to give a shorter proof and use it to derive one other fact. Below, given a generalized snake word $\mathbf{w} = \varepsilon w_{1}\cdots w_{n}$, we use $\mathbf{w}[:k]$ to mean $\varepsilon w_{1}\cdots w_{k}$. 

\begin{theorem}
    \label{thm:snakerecurrences}
    Suppose $\mathbf{w}=\varepsilon w_1\cdots w_n$ is a non-constant word defining a generalized snake poset $P(\mathbf{w})$. 
    Set $k:= \max\{j \in [n-1]:w_j\neq w_n\}$. Then
    \begin{equation}\label{eq:recurrence2}
    M_{\mathbf{w}}(t) = M_{\mathbf{w}[:k]}(t)P_{n-k}(t) + tM_{\mathbf{w}[:k-1]}(t)G_{n-k}(t)\, .
    \end{equation}
\end{theorem}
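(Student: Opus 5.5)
We would prove \eqref{eq:recurrence2} bijectively on non-nesting rook placements, along the lines of the proof of Proposition~\ref{prop:columndeletion}. First, since $\lambda/\mu$, its conjugate and its rotation $(\lambda/\mu)^{\mathrm{R}}$ have the same non-nesting rook polynomial, and swapping $L\leftrightarrow R$ in the word transposes the squarecase shape, we may assume $w_n = R$. Then $w_k = L$ and $w_{k+1}=\cdots=w_n=R$.

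Next we would describe the skew shape of $P(\mathbf{w})$ through the correspondence of Theorem~\ref{thm:correspondence}. The constant final run $w_{k+1}\cdots w_n$ produces no inner corner. It therefore contributes a block of cells at the top-right end of the shape, lying in the last $n-k$ rows and columns, and this block is a staircase $\delta_{n-k}$, as in Figure~\ref{fig:snake_poset_example}. Removing this block leaves exactly the squarecase of $\mathbf{w}[:k]$. Removing additionally the one row (or column) through the corner created by the letter $w_k$ leaves the squarecase of $\mathbf{w}[:k-1]$.

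The key geometric point to verify is how the two pieces meet. The staircase block and the old shape share one line (the junction row/column at the inner corner coming from $w_k=L$). Apart from that line, every cell of the staircase lies weakly above and right of every cell of the old shape. Two rooks in such a NE/SW position never nest, so placements on the two pieces combine freely as long as the junction line is not used twice. We would then split by whether the junction line carries a rook inside the staircase block.

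\begin{enumerate}
\item If it does not, the placement is a non-nesting placement on the old shape (with the junction line available to it) together with an independent placement on the staircase $\delta_{n-k}$ avoiding that line. Choosing the convention correctly, this contributes $M_{\mathbf{w}[:k]}(t)P_{n-k}(t)$, using $M_{\delta_{n-k}}=P_{n-k}$.
\item If it does, that rook contributes the factor $t$. The line it occupies is deleted from the old shape, leaving the shape for $\mathbf{w}[:k-1]$. As in case (2) of the proof of Proposition~\ref{prop:columndeletion}, the position of the rook along the junction line, at index $i$, kills the cells of the staircase that would nest with it or share its line. What remains is the truncated staircase $\mu_{n-k,i}$ with $0\le i\le n-k-1$. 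Summing over $i$ gives $t\,M_{\mathbf{w}[:k-1]}(t)G_{n-k}(t)$ by the definition of $G_{n-k}$.
\end{enumerate}

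We expect the main obstacle to be the bookkeeping at the junction: precisely which row or column is shared, and which cells of the old shape a rook on the junction forbids. The aim is for case (1) to be exactly a product and for case (2) to delete exactly one line of the old shape, with no extra interaction. We would pin this down first with an explicit coordinate description of the squarecase from the cover relations. We would check it on small words such as $\varepsilon LR$ and $\varepsilon LRR$ against direct enumeration, and only then write the general bijection.
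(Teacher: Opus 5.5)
Your proposal is correct and is essentially the paper's proof. The paper also splits on whether the junction line of the staircase coming from the final $R$-run carries a rook outside the $\mathbf{w}[:k]$ shape. The first case gives $M_{\mathbf{w}[:k]}P_{n-k}$ from the remaining $\delta_{n-k}$, and the second gives $t\,M_{\mathbf{w}[:k-1]}\sum_{i=0}^{n-k-1}M_{\mu_{n-k,i}}$ from the truncated staircases.

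Your setup paragraph should be corrected to match what your two cases actually use. The final run produces a staircase $\delta_{n-k+1}$, which sits at the bottom-left (low-label) end in the paper's figure, not the top-right. This staircase shares exactly one corner cell with the $\mathbf{w}[:k]$ shape. So deleting $\delta_{n-k}$ leaves that shape plus $n-k$ extra cells on the junction line, not the $\mathbf{w}[:k]$ shape itself.
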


\begin{proof}
   Let $D$ be the skew shape corresponding to $\mathbf{w}$ and suppose its rows are labeled $1,2,\ldots $ from the bottom and its columns are labeled $1,2,\ldots $ from left to right; this is obtained by applying the transformation $i\to \lceil i/2 \rceil$ to the row/column labeling used previously, e.g., in Figure~\ref{fig:snake_poset_example}.
   Let $\rho$ be a rook placement on $D$. 
   Without loss of generality, assume that the final $n-k$ letters of $\mathbf{w}$ are equal to $R$. 
   This means that the rotated staircase $\delta_{n-k+1}^{\mathrm{R}}$ occupies the first $n-k+1$ rows of $D$ from the bottom. 
   Consider the first $n-k$ columns in the row $n-k+1$ of $\delta_{n-k+1}^{\mathrm{R}}$. 

   Suppose there is no rook of $\rho$ in $(n-k+1,\ell)$ for $1 \leq \ell \leq n-k$. Then $\rho$ consists of a pair of non-nesting rook placements, one on the smaller rotated staircase $\delta_{n-k}^{\mathrm{R}}$, and one on the skew shape corresponding to the word $\mathbf{w}[:k]$. 
   Since these shapes are disjoint, this accounts for the first term.

   Suppose there is a rook of $\rho$ in $(n-k+1,\ell)$ for some $1 \leq \ell \leq n-k$. 
   Excluding that single rook, $\rho$ then consists of the pair of rook placements $(\rho', \rho'')$, where $\rho'$ is a rook placement on the shape corresponding to the word $\mathbf{w}[:k-1]$ and $\rho''$ is a rook placement on the skew shape strictly South-West of $(n-k+1,\ell)$. 
   This latter shape is simply the truncated staircase $\mu_{n-k, \ell -1}$ rotated 180 degrees. 
   Summing over all $\ell =1, \ldots , n-k$, and using the definition of $G_{n-k}$, this accounts for the second term. 
\end{proof}

Recall that the \defin{Delannoy numbers} $D(a, b)$ count the number of lattice paths from $(0,0)$ to $(b, a)$ that take East, North and North-East steps. They are defined by the formula:\[
    D(a, b) := \sum_{k = 0}^{\min(a, b)}2^{k}\binom{a}{k}\binom{b}{k}.
    \]
Let $d_{n}(t) := \sum_{i=0}^{n}D(n-i, i)t^{i}$ be the \defin{Delannoy polynomials}. They satisfy the following well-known recurrence:\begin{equation}\label{eq:delannoy-recurrence}
d_{n}(t) = (t+1)d_{n-1}(t)+td_{n-2}(t), \quad  \text{ with } \quad d_{0}(t) := 1 \text{ and } d_{1}(t) := t+1 \, .
\end{equation}

We conclude this section by applying Theorem~\ref{thm:snakerecurrences} to give a simple and direct proof of the fact that the non-nesting rook polynomial corresponding to the regular snake word is a Delannoy polynomial. The following result was originally obtained by Lee, Vindas-Meléndez, and Wang~\cite[Theorem 4.8]{lee2024generalizedsnakeposetsorder}. 
\begin{corollary}\label{cor:reg-snake-is-delannoy}
    Let $S_{n} = \varepsilon LR\ldots LRL$ be the regular snake word. Then $M_{S_{n}}(t) = d_{n+1}(t)$. 
\end{corollary}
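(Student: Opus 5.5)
Apply Theorem~\ref{thm:snakerecurrences} to the regular snake word and check that the resulting recurrence is the Delannoy recurrence~(\ref{eq:delannoy-recurrence}), with matching initial values; induction on $n$ then finishes. For $\mathbf{w}=S_n=\varepsilon w_1\cdots w_n$ alternating with $w_n=L$, the last letter differs from its predecessor, so $k=n-1$ and $n-k=1$. Here $P_1(t)=1+t$. Also $G_1(t)=M_{\mu_{1,0}}(t)=1$, since $\mu_{1,0}$ is empty; this is consistent with~(\ref{eq:g_relating_p}), because $tG_1=P_2-(1+t)P_1=(1+3t+t^2)-(1+t)^2=t$. Hence Theorem~\ref{thm:snakerecurrences} gives
\[
M_{\mathbf{w}}(t)=(1+t)\,M_{\mathbf{w}[:n-1]}(t)+t\,M_{\mathbf{w}[:n-2]}(t).
\]
This recurrence applies to any alternating word whose last two letters differ, whether it ends in $L$ or $R$, as long as the word is non-constant, i.e.\ $n\ge 2$.

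Next observe that every prefix of a regular snake word is again alternating, ending in $R$ or $L$. By the symmetries of $M$ (conjugation and $180^\circ$ rotation), or simply because Theorem~\ref{thm:snakerecurrences} applies to any alternating word, the sequence $m_j:=M_{\varepsilon w_1\cdots w_j}(t)$ satisfies
\[
m_j=(1+t)\,m_{j-1}+t\,m_{j-2}\quad\text{for all } j\ge 2.
\]
It therefore suffices to check the initial values $m_0=d_1$ and $m_1=d_2$.

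For $m_0$: the word $\varepsilon$ gives the poset of two incomparable elements (two chains of length one with no cover relations). Its skew shape is a single cell, so $m_0=1+t=d_1$.

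For $m_1$: the word $\varepsilon L$ gives four elements forming two chains $1\prec 3$ and $2\prec 4$, with the single extra relation $1\precdot 4$. Counting linear extensions by descents (or counting non-nesting rook placements on the corresponding $2$-row shape with three cells) should give $1+4t+t^2$. This matches $d_2=(t+1)^2+t$.

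Induction then yields $m_j=d_{j+1}$, and taking $j=n$ gives $M_{S_n}=d_{n+1}$.

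The main obstacle is bookkeeping rather than ideas. One must confirm that the rook-placement interpretation of $G_1$ really is the constant $1$: the cell $(n-k+1,\ell)=(2,1)$ has nothing strictly South-West of it. One must also fix the indexing so that the word of length $n$ corresponds to $d_{n+1}$, which the base cases settle. The $m_1$ computation is the only place where an explicit small count is needed. If it turns out awkward, one can instead define $m_{-1}:=1=d_0$ and check only that $m_1=(1+t)m_0+t\cdot 1$ directly from the shape: this is two cells plus one more cell, arranged so that exactly one pair of two-rook placements is forbidden.
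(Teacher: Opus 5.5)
Your strategy is the paper's: with $k=n-1$ one has $P_1=1+t$ and $G_1=1$, so Theorem~\ref{thm:snakerecurrences} reduces to the Delannoy recurrence, and everything comes down to the base cases $n=0,1$. The recurrence part is fine, including your remark that every prefix of an alternating word is alternating, so the recurrence holds at every length $j\ge 2$.

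The problem is the one explicit computation you make. For $\varepsilon L$ the answer is not $1+4t+t^2$. Moreover, $1+4t+t^2$ does not equal $d_2=(t+1)^2+t=1+3t+t^2$, so as written the base case fails and your claimed match is false.

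The value $1+4t+t^2$ is the descent polynomial of two \emph{disjoint} $2$-chains. Equivalently, it is the non-nesting rook polynomial of the $2\times 2$ square. It ignores the relation $1\prec 4$, which you yourself listed and which rules out the permutation $2413$.

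The correct count is as follows. The linear extensions $1234, 1243, 1324, 2134, 2143$ have $0,1,1,1,2$ descents, giving $1+3t+t^2$. In rook terms, the three-cell shape (a copy of $\delta_2$) has three single rooks and exactly one admissible two-rook placement, since the other two pairs of cells share a row or a column. So $M_{\varepsilon L}=P_2=1+3t+t^2$, which is the very polynomial you used when checking $tG_1=P_2-(1+t)P_1$.

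The same slip affects your fallback. For $m_1=(1+t)m_0+t$ to hold, two of the three pairs of cells must be forbidden, not exactly one; one forbidden pair would give $1+3t+2t^2$. With $m_1=1+3t+t^2=d_2$ the induction goes through, and the proof coincides with the paper's.
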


\begin{proof}
    It suffices to show that the Delannoy recurrence (\ref{eq:delannoy-recurrence}) holds for $M_{S_{n}}$ with the base cases. When $n = 0$, the skew shape is a single box, and $M_{S_{0}}(t) = t+1 = d_{1}(t)$; when $n =1$, the corresponding skew shape is $(2,1)/(1)$ which has non-nesting rook polynomial equal to $M_{S_{1}}(t) = t^2 + 3t +1 = d_{2}(t)$. Now suppose $n \geq 2$. With the notation of Theorem~\ref{thm:snakerecurrences}, $k = n-1$, and thus $P_{n-k}(t) = t+1$ and $G_{n-k}(t) = 1$. Hence~(\ref{eq:recurrence2}) reduces to~(\ref{eq:delannoy-recurrence}) and we are done.
\end{proof}

\section{$h^*$-real-rootedness}\label{sec:realrootedness}

In this section we present our main result, a proof of Conjecture~5.1~(1) from Lee, Vindas-Meléndez, and Wang~\cite{lee2024generalizedsnakeposetsorder}.  

\begin{theorem}\label{thm:interlacing_of_Ms}
    Let $\mathbf{w}$ be a generalized snake word  of length $n \geq 1$ and let $\mathbf{w}'$ be obtained from $\mathbf{w}$ by deleting $w_{n}$. 
    Then $M_{\varepsilon \mathbf{w}}$ is real-rooted and $M_{\varepsilon \mathbf{w'}}$ interlaces $M_{\varepsilon \mathbf{w}}$, i.e., we have $M_{\varepsilon \mathbf{w}'} \interl M_{\varepsilon \mathbf{w}}$.
\end{theorem}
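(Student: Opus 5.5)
Throughout, write $M_{\mathbf{v}}$ for $M_{\varepsilon\mathbf{v}}$. We argue by strong induction on $n$, proving the stronger statement that $M_{\mathbf{w}}$ is real-rooted and $M_{\mathbf{w}'} \interl M_{\mathbf{w}}$.

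\emph{Constant words.} If $\mathbf{w}$ is constant, say all letters equal $L$ (the ladder case), then the skew shape is a staircase, so $M_{\mathbf{w}} = P_{n+1}$ and $M_{\mathbf{w}'} = P_n$. The claim $P_n \interl P_{n+1}$ is the known interlacing of the modified Narayana polynomials used in Lemma~\ref{lem:g_interlaces_p}. The case $n=1$ is covered here.

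\emph{Non-constant words.} Let $k$ be as in Theorem~\ref{thm:snakerecurrences} and set $m = n-k \geq 1$. Put $A := M_{\mathbf{w}[:k]}$ and $B := M_{\mathbf{w}[:k-1]}$. Since $k < n$, the induction hypothesis gives that $A$ and $B$ are real-rooted with $B \interl A$. Applying the recurrence~(\ref{eq:recurrence2}) to both $\mathbf{w}$ and $\mathbf{w}'$ gives
\[
M_{\mathbf{w}} = A P_m + tBG_m, \qquad M_{\mathbf{w}'} = A P_{m-1} + tBG_{m-1}.
\]
The second identity needs $m \geq 2$. When $m = 1$ we have $\mathbf{w}' = \mathbf{w}[:k]$, so $M_{\mathbf{w}'} = A$ while $M_{\mathbf{w}} = (1+t)A + tB$. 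The desired relation $A \interl (1+t)A + tB$ then follows from $B \interl A$ via Lemma~\ref{lem:wagner}: part (iii) gives $A \interl tB$ and $A \interl tA$, and part (i) combines these with $A \interl A$.

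For $m \geq 2$ the plan is to eliminate $G_m$ and $G_{m-1}$ using~(\ref{eq:g_relating_p}), namely $tG_{j} = P_{j+1} - (1+t)P_{j}$. This rewrites both polynomials as combinations of consecutive Narayana polynomials:
\[
M_{\mathbf{w}} = A P_m + B\bigl(P_{m+1} - (1+t)P_m\bigr), \qquad M_{\mathbf{w}'} = AP_{m-1} + B\bigl(P_m - (1+t)P_{m-1}\bigr).
\]
Dividing formally by $B$ puts these exactly in the shape of Lemma~\ref{lem:weighted-narayana-interlacing}, $(\lambda t+\nu)P_{j-1} + P_j$, with $A/B - (1+t)$ playing the role of $\lambda t + \nu$. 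That lemma only handles linear weights, so we need to reduce to it.

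\emph{Reduction to linear weights.} Since $B \interl A$ and $\deg A - \deg B \in \{0,1\}$, we can write $A/B$ as a linear part plus a positive combination of terms $c_i/(t - b_i)$, where the $b_i$ are the roots of $B$. The cleanest route is to apply Theorem~\ref{thm:LW-main-interlacing} directly with the following data:
\begin{align*}
f &= P_m, & g &= P_{m-1}, \\
F &= M_{\mathbf{w}}, & G &= M_{\mathbf{w}'}, \\
c &= B, & d &= A - (1+t)B.
\end{align*}
Here $a$ and $b$ come from the three-term recurrence~(\ref{eq:narayana-three-term}). At a root $r$ of $G$ we have $d(r)P_{m-1}(r) = -c(r)P_m(r)$. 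The same manipulation as in the proof of Lemma~\ref{lem:weighted-narayana-interlacing} then reduces $\Delta(r) \geq 0$ to the Tur\'an inequality of Lemma~\ref{lem:NarayanaTuranInequality}, provided $B(r)P_{m-1}(r)^2$ has a controlled sign.

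\emph{Main obstacle.} The difficulty is that Theorem~\ref{thm:LW-main-interlacing} requires $c$ or $d$ to be a positive constant, and here both are polynomials. If this fails, the fallback is a "common interlacer" argument. Fix $\mathbf{w}[:k]$; each factor $1/(t-b_i)$ contributes a sign-definite piece, and one shows each linear specialization $(\lambda t+\nu)P_{m-1} + P_m$ with $\lambda \ge 0$ and $\nu \ge -1$ interlaces the corresponding $F$-term. Lemma~\ref{lem:weighted-narayana-interlacing} supplies exactly this interlacing. We then glue the pieces using Lemma~\ref{lem:wagner}(i),(ii), which preserves interlacing under nonnegative combinations.

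I expect the main work to be verifying that the coefficient of the $t$-linear part satisfies $\nu \geq -1$, and handling the sign of $B$ at the roots of $G$. The first corresponds to $A - tB$ having nonnegative coefficients; Lemma~\ref{lem:g_interlaces_f-tg} suggests this is the right normalization.
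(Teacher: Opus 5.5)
The parts of your proof for constant words and for $m=1$ are correct and match the paper. For $m\ge 2$, however, which is the heart of the theorem, the proposal contains no actual argument.

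\textbf{Why the Liu--Wang step fails.} Theorem~\ref{thm:LW-main-interlacing} does not apply to your data. Here $c=B$ has degree $k\ge 1$, and $d=A-(1+t)B$ satisfies $d(0)=0$, so neither is a positive constant. This is not a technicality. At a root $r$ of $G=M_{\varepsilon\mathbf{w}'}$ one has $c(r)F(r)=-\Delta(r)g(r)$. Constancy of $c$ is exactly what turns the sign pattern of $g=P_{m-1}$ into a sign pattern for $F$. With $c=B$ you would need the signs of $BP_{m-1}$ at the roots of $M_{\varepsilon\mathbf{w}'}$, and you never address this. Condition (c) itself is harmless: your substitution gives $\Delta(r)P_{m-1}(r)^2=B(r)^2\bigl[P_m(r)^2-P_{m-1}(r)P_{m+1}(r)\bigr]$, so the sign of $B$ does not enter there.

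\textbf{Why the fallback fails.} Under $B\interl A$ the residues of $A/B$ are nonpositive, not positive. After clearing denominators, the pieces are not of the linear-weight form covered by Lemma~\ref{lem:weighted-narayana-interlacing}, and they enter with negative coefficients. Moreover, Lemma~\ref{lem:wagner}(i),(ii) only combine interlacings that share a common interlacer. Knowing $G_j\interl F_j$ for each $j$ does not give $\sum_j G_j\interl\sum_j F_j$.

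\textbf{The missing idea.} You need a mechanism that upgrades interlacing for linear weights $\lambda t+\mu$ to arbitrary interlacing polynomial inputs. The paper gets this from Br\"and\'en's theorem on matrices preserving interlacing sequences~\cite[Theorem 8.5]{Branden2015}. It applies the theorem to the matrix with rows $(P_{m-1},G_{m-1})$ and $(Q_m,H_m)$, where $Q_m=P_m-P_{m-1}$ and $H_m=G_m-G_{m-1}$. The matrix acts on $(M_{\varepsilon\mathbf{w}[:k]},\,tM_{\varepsilon\mathbf{w}[:k-1]})$, which is an interlacing pair by induction and Lemma~\ref{lem:wagner}(iii).

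Set $U_m=(\lambda t+\nu)P_{m-1}+P_m$ and $V_m=(\lambda t+\nu)G_{m-1}+G_m$, with $\lambda\ge0$ and $\nu\ge-1$. Br\"and\'en's criterion reduces to $V_m\interl U_m$. This follows from $U_m\interl U_{m+1}$ (Lemma~\ref{lem:weighted-narayana-interlacing}), the identity $U_{m+1}=(1+t)U_m+tV_m$, and Lemma~\ref{lem:g_interlaces_f-tg}. The result is $M_{\varepsilon\mathbf{w}'}\interl M_{\varepsilon\mathbf{w}}-M_{\varepsilon\mathbf{w}'}$, and Lemma~\ref{lem:wagner}(iv) finishes.

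\textbf{An alternative repair.} Your own sign computation can also be completed directly. You have $B\interl A$, and $P_{m-1}\interl tG_{m-1}$ by Lemma~\ref{lem:g_interlaces_p} with Lemma~\ref{lem:wagner}(iii). Lemma~\ref{lem:wagner}(i) then gives $BP_{m-1}\interl AP_{m-1}+tBG_{m-1}=M_{\varepsilon\mathbf{w}'}$. At roots $r$ of $M_{\varepsilon\mathbf{w}'}$ one has $M_{\varepsilon\mathbf{w}}(r)P_{m-1}(r)=-B(r)\bigl[P_m(r)^2-P_{m-1}(r)P_{m+1}(r)\bigr]$. Together these force $M_{\varepsilon\mathbf{w}}$ to alternate in sign at those roots, up to degenerate cases. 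As written, the proposal contains neither this nor Br\"and\'en's theorem.
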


\begin{proof}
    We proceed by strong induction on $n$. 
    The base case $n=1$ holds immediately, since $M_{\varepsilon R}(t) = M_{\varepsilon L} = t^{2} + 3t +1$ which has real roots $\frac{-3-\sqrt{5}}{2}$ and $\frac{-3+\sqrt{5}}{2}$ that lie on either side of $-1$, the zero of $M_{\varepsilon}(t) = 1+t$. 
    Now suppose $n \geq 2$ and the result holds for all generalized snake words of length less than $n$. 
    If $\mathbf{w}$ equals $\varepsilon LL\ldots L$ or $\varepsilon RR\ldots R$, then the result holds from the real-rootedness and interlacing of Narayana polynomials. 
    The index $k$ from Theorem~\ref{thm:snakerecurrences} is thus well-defined. By Lemma~\ref{lem:wagner}, the conclusion is immediate if $k=n-1$, since $P_{1}(t) = t+1$ and $G_{1}(t) = 1$. 
    So, we may assume $k < n-1$. Define $Q_{n} = P_{n} - P_{n-1}$ and $H_{n} = G_{n} - G_{n-1}$. By applying  (\ref{eq:recurrence2}) to both $\mathbf{w}$ and $\mathbf{w'}$, and using the notation $\mathbf{w}[:k]:=w_1\cdots w_k$, we can write \[
    \begin{pmatrix}
        M_{\mathbf{\varepsilon w'}} \\ M_{\mathbf{\varepsilon w}} - M_{\mathbf{\varepsilon w'}}  
    \end{pmatrix}
    = \begin{pmatrix}
        P_{n-k-1} & G_{n-k-1} \\ Q_{n-k} & H_{n-k}
    \end{pmatrix}
    \begin{pmatrix}
        M_{\varepsilon \mathbf{w}[:k]} \\ 
        t\cdot M_{\varepsilon \mathbf{w}[:k-1]}
    \end{pmatrix}
    \] 

By the inductive hypothesis and Lemma~\ref{lem:wagner}, we have $M_{\varepsilon \mathbf{w}[:k]} \interl t \cdot M_{\varepsilon \mathbf{w}[:k-1]}$. 
We now claim that the matrix in the equation above satisfies \begin{equation}\label{eq:matrix_interlacing}
    (\lambda t + \mu )G_{n-k-1} + H_{n-k} \interl (\lambda t + \mu )P_{n-k-1} + Q_{n-k} \quad \text{for all $\lambda, \mu \geq 0$.}
\end{equation}

Assume that the claim holds. Then the conditions of a theorem of Br\"andén~\cite[Theorem 8.5]{Branden2015} are satisfied and hence the matrix maps interlacing sequences to interlacing sequences. 
In particular, we have $M_{\varepsilon \mathbf{w'}} \interl  M_{\mathbf{\varepsilon w}} - M_{\mathbf{\varepsilon w'}}$, and since $M_{\mathbf{\varepsilon w}} - M_{\mathbf{\varepsilon w'}}$ has non-negative coefficients, all roots are non-positive. 
By applying part \textit{(iv)} of Lemma~\ref{lem:wagner}, it follows that $M_{\varepsilon \mathbf{w}}$ is real-rooted and $M_{\varepsilon\mathbf{w}'} \interl M_{\varepsilon \mathbf{w}}$, completing the induction step. 

We now prove the claim, which will complete the proof.
After reindexing and using the definitions of $Q_{n}$ and $H_{n}$, it is enough to show that for all $m \geq 2$, \begin{equation}\label{eq:last-interlacing}
(\lambda t + \nu)G_{m-1} + G_{m} \interl (\lambda t + \nu)P_{m-1} + P_{m} \quad \text{for all $\lambda \geq 0, \nu \geq -1$.}
\end{equation}
By Lemma~\ref{lem:weighted-narayana-interlacing}, for all $m \geq 2$, \[
(\lambda t + \nu)P_{m-1} + P_{m} \interl (\lambda t + \nu)P_{m} + P_{m+1} \quad \text{for all $\lambda \geq 0, \nu \geq -1$.}
\]
If we set $U_{m} \coloneq (\lambda t + \nu)P_{m-1} + P_{m}$ and $V_{m} \coloneq (\lambda t + \nu)G_{m-1} + G_{m}$, then Lemma~\ref{lem:weighted-narayana-interlacing} says that $U_{m} \interl U_{m+1}$, while we want to show $V_{m} \interl U_{m}$. 
By \eqref{eq:g_relating_p}, we can write $U_{m+1} = (1+t)U_{m} + tV_{m}$. Since $U_{m} \interl U_{m+1}$, by Lemma~\ref{lem:g_interlaces_f-tg}, we also have $U_{m} \interl U_{m} + tV_{m}$. 
It follows that $U_{m} \interl t V_{m}$ which gives us $V_{m} \interl U_{m}$, as desired.
\end{proof}

\section{Tool and computational resource disclosure}\label{sec:aidisclosure}

The authors used Claude Opus 4.7 to discuss the proof strategy of Theorem~\ref{thm:interlacing_of_Ms} and ChatGPT 5.5 to locate the reference in the proof of Lemma~\ref{lem:NarayanaTuranInequality}. All information that was produced required editing and revision by the authors, and the final versions of the resulting proofs were written and checked for correctness by the authors.
All writing for this article was done by the authors.

\bibliographystyle{alpha}
\bibliography{bibliography}
\end{document}